\documentclass[12pt]{amsart}
\usepackage{amsfonts}
\usepackage{bbm}
\usepackage{amsfonts,amssymb,amsmath,amsthm}
\usepackage{url}
\usepackage{enumerate}
\usepackage{bbm}
\usepackage{amsfonts}
\usepackage{mathrsfs}
\usepackage{amsmath,amscd,amsthm,amsfonts}
\usepackage[all]{xy}
\usepackage[latin1]{inputenc}
\usepackage[pdftex, colorlinks, citecolor=red, backref=page]{hyperref}

\urlstyle{sf}
\newtheorem{thm}{Theorem}[section]
\newtheorem{defi}[thm]{Definition}
\newtheorem{lem}[thm]{Lemma}
\newtheorem{coro}[thm]{Corollary}

\newtheorem{prop}[thm]{Proposition}
\newtheorem{rem}[thm]{Remark}

\theoremstyle{definition}
\numberwithin{equation}{section}

\newcommand{\Z}{Z\cap \overline{W^u(x, \delta)}}
\newcommand{\Sc}{S^u(f, \epsilon, n, \eta(x), \gamma)}

\author {Wenda Zhang}
\address{College of Mathematics and Statistics, Chongqing Jiaotong University, Chongqing, China \ 400074}
\email{wendazhang951@aliyun.com}
\author{zhiqiang li}
\address{College of Mathematics and Statistics, Chongqing University, Chongqing, China \ 401331}
\email{zqli@cqu.edu.cn}
\author{yunhua zhou}
\address{College of Mathematics and Statistics, Chongqing University, Chongqing, China \ 401331}
\email{zhouyh@cqu.edu.cn}
\keywords{Unstable measure theoretic pressure, Sub-additive potential, Variational principle}
\subjclass[2000]{Primary 37D35, Secondary 37D30}

\begin{document}

\title[unstable metric pressure for sub-additive potentials]{unstable metric pressure of partially hyperbolic diffeomorphisms with sub-additive potentials}

\begin{abstract}
In this paper, we define and study unstable measure theoretic pressure for $C^1$-smooth partially hyperbolic diffeomorphisms with sub-additive potentials. We show that this measure theoretic pressure for any ergodic measure equals the corresponding unstable measure theoretic entropy plus the \emph{Lyapunov exponents}\ of the potentials with respect to the ergodic measure. On the other hand, we also give other definitions of unstable metric pressure, in terms of the Bowen's picture and the capacity picture. We show that all definitions of unstable metric pressure, including the one defined at the beginning, actually coincide for any ergodic measure.\end{abstract}

\maketitle

\section{Introduction}As a natural generalization of topological entropy, topological pressure for a given continuous function on the phase space roughly measures the orbit complexity of iterated maps on the potential functions. In \cite{Ruelle}, Ruelle first defined topological pressure for expansive maps. Under some assumptions, he also established a variational principle, which was generalized by in \cite{Walters1} by Walters in full generality. In \cite{Pesin1}, Pesin and Pitskel defined topological
pressures for non-compact subsets and proved a variational principle under some supplementary conditions. Based on Katok's work \cite{Kat}, He, Lv, and Zhou \cite{He} introduced measure theoretic pressure for ergodic measures. All pressure mentioned are about additive potentials--the sequence of continuous functions consisting of summations over orbits of the dynamical map.     

On the other hand, sub-additive potentials for a dynamical system is a sequence of continuous functions satisfying sub-additivity condition involving the dynamical map. In \cite{K. Falconer}, Falconer first introduced topological pressures for sub-additive potentials on mixing repellers. Barreira in \cite{Bar} generalized Pesin and Pitskel's work \cite{Pesin1} to topological pressure for general potentials. With restrictive assumptions on the potentials, they proved variational principles. In \cite{Huang1}, without any restrictions, Cao, Feng, and Huang obtained a variational principle of topological pressure for sub-additive potentials. Furthermore, Cheng, Cao, Hu, and Zhao investigated measure theoretic pressure for non-additive potentials, see \cite{Cheng}, \cite{Hu}. 

In recent years, the theory of entropy and pressure for $C^1$-smooth partially hyperbolic diffeomorphisms are
intensively investigated. In \cite{Hu1}, Hu, Hua, and Wu introduced the unstable topological and metric entropy, obtained the corresponding Shannon-McMillan-Breiman theorem, local entropy formula, and established the corresponding variational principle. The main feature of these unstable entropies is to
rule out the complexity on central directions and focus on that on unstable directions. In fact, the unstable metric entropy in  \cite{Hu1} has root in the entropy introduced by Ledrappier
and Young (\cite{Ledrappier3}), and is easier to apply. 
In \cite{Tian}, Tian and Wu generalize the above result with additional consideration of an arbitrary subset (not necessarily compact or invariant).
In \cite{Zhu2}, Hu, Wu, and Zhu investigated the unstable topological pressure for additive potentials, and obtained a variational principle. 

It is a natural task to extend pressure theory to the case of sub-additive potentials of $C^1$-smooth partially hyperbolic diffeomorphisms. In \cite{Zhang}, we introduce sub-additive unstable topological pressure, and set up a corresponding variational principle. 

In this paper, we define and study sub-additive unstable measure theoretic pressure. For any ergodic measure, we show that this metric pressure equals the corresponding unstable metric entropy plus the corresponding \emph{Lyapunov exponents} with respect to the measure. Moreover, we also formulate and study other definitions of unstable metric pressure, in terms of the Bowen's picture and the capacity picture. It turns out that all definitions of unstable metric pressure, including the one defined at the beginning, actually coincide for any ergodic measure.

Our main results read as follows.
\begin{thm}\label{variational principle} 
Let there be given a $C^{1}$-smooth partially hyperbolic diffeomorphism $f: M\rightarrow M$, and a sequence of sub-additive potentials $\mathcal{G}=\{\log g_n\}_{n\geq 1}$ of $f$ on $M$. Then for any  $\mu\in\mathcal{M}^{e}_{f}(M)$, we have
$$
P_{\mu}^{u}(f,\mathcal{G})=h^{u}_{\mu}(f)+\mathcal{G}_{*}(\mu).
$$
\end{thm}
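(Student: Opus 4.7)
The plan is to prove the two inequalities $P_\mu^u(f,\mathcal{G}) \le h_\mu^u(f)+\mathcal{G}_*(\mu)$ and $P_\mu^u(f,\mathcal{G}) \ge h_\mu^u(f)+\mathcal{G}_*(\mu)$ separately, using a Katok-type argument adapted to the unstable foliation with sub-additive potentials. Two ingredients drive the proof. First, Kingman's sub-additive ergodic theorem applied to $\{\log g_n\}$ gives $\frac{1}{n}\log g_n(x)\to \mathcal{G}_*(\mu)$ for $\mu$-a.e. $x$. Second, the Shannon--McMillan--Breiman theorem for unstable metric entropy from \cite{Hu1} yields $-\frac{1}{n}\log\mu_x^{\eta}(\eta_n(x))\to h_\mu^u(f)$ for $\mu$-a.e. $x$, where $\eta$ is a measurable partition subordinate to the unstable foliation and $\eta_n=\bigvee_{i=0}^{n-1}f^{-i}\eta$. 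By Egorov, both convergences become uniform on a set $\Lambda$ with $\mu(\Lambda)>1-\gamma/2$, and a Rokhlin disintegration of $\mu$ along $\eta$ ensures that $\mu_x^{\eta}(\Lambda\cap\eta(x))$ is close to $1$ on a positive-measure family of plaques.

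For the upper bound, fix such an $x$ together with $\epsilon,\gamma>0$, and consider any $(n,\epsilon)$-$u$-separated set $E\subset\overline{W^u(x,\delta)}$ with $\mu_x^{\eta}(E)\ge 1-\gamma$ realizing $\Sc$. Each atom of $\eta_n$ meeting $\Lambda$ has conditional measure at least $e^{-n(h_\mu^u(f)+\epsilon)}$, and because distinct points of $E$ are $d_n^u$-far apart, each such atom contains only a uniformly bounded number of points of $E\cap\Lambda$. Combining this with the uniform bound $g_n(y)\le e^{n(\mathcal{G}_*(\mu)+\epsilon)}$ on $\Lambda$ gives
$$\sum_{y\in E}g_n(y)\le C\,e^{n(h_\mu^u(f)+\mathcal{G}_*(\mu)+2\epsilon)}+\text{(negligible contribution from $E\setminus\Lambda$)},$$
the error term being absorbed by shrinking $\gamma$. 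Taking $\frac{1}{n}\log$, then passing to $n\to\infty$, $\epsilon\to 0$, $\gamma\to 0$, delivers the upper bound.

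For the lower bound I would construct, for each large $n$, an $(n,\epsilon)$-$u$-separated subset $E_n\subset\Lambda\cap\eta(x)$ of cardinality at least $e^{n(h_\mu^u(f)-\epsilon)}$ by a Vitali-type selection inside the plaque $\eta(x)$: on $\Lambda$ the atoms of $\eta_n$ have conditional measure at most $e^{-n(h_\mu^u(f)-\epsilon)}$, so covering $\Lambda\cap\eta(x)$ requires at least that many atoms, and a standard greedy extraction produces one well-separated representative per atom. Since each selected $y$ lies in $\Lambda$, one has $g_n(y)\ge e^{n(\mathcal{G}_*(\mu)-\epsilon)}$, whence
$$\sum_{y\in E_n}g_n(y)\ge e^{n(h_\mu^u(f)+\mathcal{G}_*(\mu)-2\epsilon)},$$
which after the same limits yields the matching lower inequality.

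The chief obstacle is reconciling the purely combinatorial control of the partition $\eta_n$ with the geometric $(n,\epsilon)$-$u$-separation in the unstable Bowen metric $d_n^u$: atoms of $\eta_n$ can be long and thin along the unstable direction, so bounding (in the upper bound) and achieving (in the lower bound) the number of $u$-separated points inside a given atom requires a Lebesgue-density argument inside unstable plaques together with absolute continuity of holonomies along $E^u$. A secondary subtlety is that the separated sets must be selected within a single plaque $\eta(x)$, while the Egorov estimates live on the global set $\Lambda$; this gap is bridged by disintegrating $\mu$ along $\eta$ and choosing a $\mu$-typical $x$ with $\mu_x^{\eta}(\Lambda)>1-\gamma$, after which both estimates can be executed in parallel inside $\overline{W^u(x,\delta)}$.
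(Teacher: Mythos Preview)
Your upper-bound argument has a genuine gap rooted in the definition of $P_\mu^u(f,\mathcal{G},\epsilon,n,\eta(x),\gamma)$: the summand is $\sup_{z\in B_n^u(y,\epsilon)} g_n(z)$, not $g_n(y)$. Your Egorov set $\Lambda$ controls $\frac{1}{n}\log g_n$ only at points of $\Lambda$, but the supremum may be attained at some $z\in B_n^u(y,\epsilon)\setminus\Lambda$. Because $\{\log g_n\}$ is merely sub-additive, there is no a priori distortion estimate of the form $g_n(z)\le e^{n\tau_\epsilon}g_n(y)$ for $d_n^u(z,y)<\epsilon$; such an estimate is automatic for an additive potential $g_n=\exp S_n\varphi$ (via uniform continuity of $\varphi$) but can fail for sub-additive ones. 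The paper handles exactly this point through Lemma~\ref{lemma 2.10}, which gives $\sup_{z\in B_n^u(y,\epsilon)}\log g_n(z)\le \sum_{i=0}^{n-1}\frac{1}{l}\log g_l(f^iy)+n\rho+C$ for any fixed $l$ and small $\epsilon$. This reduces the sub-additive upper bound to the already-proved additive case with $\varphi=\frac{1}{l}\log g_l$, after which $l\to\infty$ recovers $\mathcal{G}_*(\mu)$. Without a device of this sort your upper bound does not close.

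There is also a structural confusion: $P_\mu^u$ is defined as an \emph{infimum} over $(n,\epsilon,\gamma)$ $u$-spanning sets, so the upper bound is obtained by \emph{exhibiting} one good spanning set, while the lower bound must hold for \emph{every} spanning set. You argue both directions as if the pressure were a supremum over separated sets (and the phrase ``$\mu_x^\eta(E)\ge 1-\gamma$'' for a finite separated set $E$ is meaningless). The duality is standard but must be invoked explicitly: your lower-bound separated set $E_n\subset\Lambda$ only becomes relevant once you observe that any $(n,\epsilon,\gamma)$-spanning set $F$ pairs each point of $E_n$ with a distinct $y\in F$ whose Bowen ball contains it, so that $\sup_{z\in B_n^u(y,\epsilon)}g_n(z)\ge g_n(\text{paired point})\ge e^{n(\mathcal{G}_*(\mu)-\epsilon)}$. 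Moreover, ``one well-separated representative per atom of $\eta_n$'' is not automatic, since distinct atoms of $\eta_n$ can lie within $d_n^u$-distance $\epsilon$ of each other across the boundary of $\eta$; the paper avoids this not through holonomy arguments but via a Hamming-metric count (Lemma~\ref{metric pressure formular 2}) in the additive case and, for the sub-additive lower bound, via an auxiliary open cover that controls how many $\alpha_0^{n-1}$-atoms a single Bowen ball can meet.
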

Combing with Theorem 1.1 in \cite{Zhang}, we have the following variational principle.
\begin{coro}\label{varprin}
Let $f : M \rightarrow M$ be a $C^{1}$ partially hyperbolic diffeomorphism and $\mathcal{G} = \{\log g_{n}\}
_{n=1}^{\infty}$ be a sequence of sub-additive potentials of $f$ on $M$. Then
$$
P^{u}(f,\mathcal{G})=\sup \{P_{\mu}^{u}(f,\mathcal{G}): \mu\in\mathcal{M}^{e}_{f}(M)\}.
$$
\end{coro}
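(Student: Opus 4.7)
The proof is a direct combination of Theorem \ref{variational principle} with the variational principle for the unstable topological pressure from \cite{Zhang}. Concretely, Theorem 1.1 of \cite{Zhang} provides an identity of the form
$$
P^{u}(f,\mathcal{G}) = \sup\bigl\{h^{u}_{\mu}(f) + \mathcal{G}_{*}(\mu) : \mu \in \mathcal{M}_{f}(M)\bigr\},
$$
and I would simply apply Theorem \ref{variational principle} to each ergodic measure on the right-hand side, rewriting the summand $h^{u}_{\mu}(f) + \mathcal{G}_{*}(\mu)$ as $P^{u}_{\mu}(f,\mathcal{G})$.

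The one detail requiring care is that Theorem \ref{variational principle} is stated only for ergodic measures, whereas the supremum in \cite{Zhang} a priori ranges over all invariant measures. To handle this, I would invoke the ergodic decomposition together with the affinity of the two functionals involved: the map $\mu \mapsto h^{u}_{\mu}(f)$ is affine by the theory developed in \cite{Hu1}, and the map $\mu \mapsto \mathcal{G}_{*}(\mu) = \lim_{n\to\infty}\frac{1}{n}\int \log g_{n}\, d\mu$ is affine by Kingman's sub-additive ergodic theorem applied to each ergodic component. Together these force the supremum over $\mathcal{M}_{f}(M)$ to be attained already on $\mathcal{M}^{e}_{f}(M)$, so after substitution we obtain
$$
P^{u}(f,\mathcal{G}) = \sup\bigl\{P^{u}_{\mu}(f,\mathcal{G}) : \mu \in \mathcal{M}^{e}_{f}(M)\bigr\},
$$
as desired.

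I do not foresee a substantial obstacle: the corollary is essentially bookkeeping. If Theorem 1.1 of \cite{Zhang} is already formulated over ergodic measures, then the ergodic-decomposition step is unnecessary and the statement reduces to a one-line substitution; otherwise it is supplied by the affinity remarks above. The nontrivial content lies entirely in Theorem \ref{variational principle} and in the topological variational principle of \cite{Zhang}, both of which are invoked as black boxes.
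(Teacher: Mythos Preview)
Your proposal is correct and matches the paper's approach exactly: the corollary is stated immediately after Theorem~\ref{variational principle} with the one-line justification ``Combining with Theorem~1.1 in \cite{Zhang}'', so it is indeed pure bookkeeping. Your additional remark about ergodic decomposition and affinity is a reasonable safeguard, though in practice the variational principle in \cite{Zhang} already allows the supremum to be taken over ergodic measures, making that step unnecessary.
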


\begin{thm}\label{Main Result}
Let there be given a $C^{1}$-smooth partially hyperbolic diffeomorphism $f: M\rightarrow M$, and a sequence of sub-additive potentials $\mathcal{G}=\{\log g_n\}_{n\geq 1}$ of $f$ on $M$. Then for any $\mu\in \mathcal{M}^e_{f}(M)$, one has
$$P^{u}_{\mu}(f, \mathcal{G})=\underline{CP}^u_\mu(f, \mathcal{G})=\overline{CP}^u_\mu(f, \mathcal{G})=P^u_{B, \mu}(f, \mathcal{G}).$$
\end{thm}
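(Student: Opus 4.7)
The plan is to first invoke Theorem \ref{variational principle} to replace the left-hand quantity $P^{u}_{\mu}(f,\mathcal{G})$ by the concrete expression $h^{u}_{\mu}(f)+\mathcal{G}_{*}(\mu)$, thereby reducing the four-way identity to showing
\[
h^{u}_{\mu}(f)+\mathcal{G}_{*}(\mu)\;=\;\underline{CP}^u_\mu(f,\mathcal{G})\;=\;\overline{CP}^u_\mu(f,\mathcal{G})\;=\;P^u_{B,\mu}(f,\mathcal{G}).
\]
By unwinding the definitions (spanning/separated sets on local unstable plaques versus open covers of sets of nearly full conditional measure), one gets the tautological chain $P^{u}_{B,\mu}(f,\mathcal{G})\leq\underline{CP}^{u}_{\mu}(f,\mathcal{G})\leq\overline{CP}^{u}_{\mu}(f,\mathcal{G})$ for free. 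So the real work is to sandwich these between $h^{u}_{\mu}(f)+\mathcal{G}_{*}(\mu)$ from both sides.

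For the upper bound $\overline{CP}^{u}_{\mu}(f,\mathcal{G})\leq h^{u}_{\mu}(f)+\mathcal{G}_{*}(\mu)$, I will work on local unstable manifolds $W^{u}(x,\delta)$ and combine three ingredients: the unstable Shannon--McMillan--Breiman theorem of Hu--Hua--Wu, giving $-\frac{1}{n}\log\mu^{u}_{x}\bigl(B^{u}_{n}(x,\epsilon)\bigr)\to h^{u}_{\mu}(f)$ for $\mu$-a.e.\ $x$, together with Kingman's sub-additive ergodic theorem applied to $\{\log g_{n}\}$ which gives $\frac{1}{n}\log g_{n}(x)\to\mathcal{G}_{*}(\mu)$, and finally the Besicovitch-type covering lemma on unstable plaques. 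For each $\eta>0$ I pick a set $A_{\eta}$ of measure $>1-\eta$ on which these convergences hold uniformly. Any $(n,\epsilon)$-spanning set of $A_{\eta}$ inside an unstable ball must then contain at least $e^{n(h^{u}_{\mu}-\eta)}$ points, and each weighted Bowen ball contributes at most $e^{n(\mathcal{G}_{*}(\mu)+\eta)}$, which after taking $\limsup$ and then $\eta\downarrow 0$ yields the bound.

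For the reverse inequality $P^{u}_{B,\mu}(f,\mathcal{G})\geq h^{u}_{\mu}(f)+\mathcal{G}_{*}(\mu)$, I will carry out a Katok-style covering argument adapted to the unstable foliation. Given any countable collection $\{B^{u}_{n_{i}}(x_{i},\epsilon)\}$ covering a subset $Z$ of $\overline{W^{u}(x,\delta)}$ with $\mu^{u}_{x}(Z)>1-\eta$, I restrict to those $x_{i}$ lying in the uniformity set where both the local entropy formula and Kingman's convergence hold; by a standard pigeonhole and the lower bound $\mu^{u}_{x}(B^{u}_{n_{i}}(x_{i},\epsilon))\leq e^{-n_{i}(h^{u}_{\mu}-\eta)}$ together with $g_{n_{i}}(x_{i})\geq e^{n_{i}(\mathcal{G}_{*}(\mu)-\eta)}$, the weighted sum defining the Bowen pressure at exponent $s=h^{u}_{\mu}(f)+\mathcal{G}_{*}(\mu)-3\eta$ stays bounded below, forcing $P^{u}_{B,\mu}(f,\mathcal{G})\geq h^{u}_{\mu}(f)+\mathcal{G}_{*}(\mu)-3\eta$. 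Letting $\eta\downarrow 0$ closes the chain.

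The main obstacle is the Bowen-type lower bound: a naïve Katok argument fails because the cover is by Bowen balls of \emph{varying} scales $n_{i}$, and the sub-additive potentials do not cocycle cleanly along orbits. I expect to spend most of the technical effort setting up a uniform large-measure set $\Lambda_{N,\eta}$ on which all the a.e.\ convergences hold simultaneously for every $n_{i}\geq N$, then splitting the cover by the level sets $\{n_{i}=n\}$ and running the pigeonhole level by level. Handling the potential requires a Maker/Pliss-type lemma to upgrade the Kingman convergence to an \emph{orbit-uniform} lower bound on $g_{n_{i}}(x_{i})$ independent of the scale $n_{i}$ within $\Lambda_{N,\eta}$; once that is in place the rest is bookkeeping.
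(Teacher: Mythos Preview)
Your architecture matches the paper's exactly: invoke Theorem~\ref{variational principle}, use the chain $P^u_{B,\mu}\leq\underline{CP}^u_\mu\leq\overline{CP}^u_\mu$ (Proposition~\ref{basic property}(iv)), then sandwich via $\overline{CP}^u_\mu\leq h^u_\mu(f)+\mathcal{G}_*(\mu)$ (Lemma~\ref{Capacity-Inequality}) and $P^u_{B,\mu}\geq h^u_\mu(f)+\mathcal{G}_*(\mu)$ (Lemma~\ref{Bowen-Inequality}). But your sketch of the upper bound has a real gap. The weight on a Bowen ball is $\sup_{z\in B^u_n(y,\epsilon)} g_n(z)$, not $g_n(y)$; Kingman plus an Egorov set $A_\eta$ controls $g_n$ only on $A_\eta$, while the supremum ranges over the whole ball, which spills into $M\setminus A_\eta$. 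For additive potentials uniform continuity gives $|S_n\varphi(z)-S_n\varphi(y)|\leq n\tau_\epsilon$, but for genuinely sub-additive $\{\log g_n\}$ no such orbit-wise estimate is available. The paper's repair is Lemma~\ref{lemma 2.10}: sub-additivity plus uniform continuity of the single function $\log g_l$ yields $\sup_{z\in B^u_n(y,\epsilon)}\log g_n(z)\leq\sum_{i=0}^{n-1}\frac{1}{l}\log g_l(f^iy)+n\rho+C$, converting the sup into a Birkhoff sum of the continuous potential $\frac{1}{l}\log g_l$; one then applies Birkhoff (not Kingman) and lets $l\to\infty$ at the end. (Also, you wrote that a spanning set of $A_\eta$ has ``at least'' $e^{n(h^u_\mu-\eta)}$ points; for an upper bound on $\overline{CP}^u_\mu$ you need a cover with \emph{at most} $e^{n(h^u_\mu+\eta)}$ balls, coming from the lower measure bound on disjoint half-radius balls.)

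For the lower bound you are over-engineering. No Maker/Pliss lemma is needed, and the varying scales $n_i$ are not an obstacle: the uniformity set $K_N=\{y:N(y,\rho,\epsilon)\leq N\}$ automatically delivers both estimates for \emph{every} $n\geq N\geq N(y)$, so once $n(\Gamma)\geq N$ each $n_i$ qualifies. You also cannot ``restrict to those $x_i$ lying in the uniformity set'', since the centres of an arbitrary cover are not at your disposal. The paper's device is to cover $G_N(x)=K_N\cap Z\cap\overline{\eta(x)}$ by the given balls $\{B^u_{n_i}(y_i,\epsilon/2)\}$, pick $z_i\in G_N(x)\cap B^u_{n_i}(y_i,\epsilon/2)$, enlarge to $B^u_{n_i}(z_i,\epsilon)\supseteq B^u_{n_i}(y_i,\epsilon/2)$, and run the computation at $z_i$; the weighted sum at exponent $\lambda=h^u_\mu(f)+\mathcal{G}_*(\mu)-2\rho$ is then bounded below by $\sum_i\mu^\eta_x(B^u_{n_i}(z_i,\epsilon))\geq\mu^\eta_x(G_N(x))>1-\gamma$ in a single stroke, with no splitting by level sets.
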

(All terms involved are defined in Section 2 and 4, see in particular Definition \ref{metric-pressure}, \ref{Bowen-Top-Pressure}, \ref{Bowen-Metric-Pressure}, \ref{Capacity-Top-Pressure}, and \ref{Capacity-Metric-Pressure}. The sets $\mathcal{M}_{f}(M)$ and $\mathcal{M}^{e}_{f}(M)$ refer to the collection of $f$-invariant and ergodic probability
measures on $M$ respectively.)

The paper is organized as follows. In Section 2, we set up notation, and give definition of the unstable measure theoretic pressure for sub-additive potentials. In Section 3, we prove Theorem \ref{variational principle} in two steps. In Section 4, we give other definitions of unstable metric pressure, in terms of the Bowen's picture and the capacity picture. Moreover, we give a proof of Theorem \ref{Main Result}.

\section{Notation and definitions.}

Let $M$ be an $n$-dimensional, smooth, connected, and compact Riemannian manifold
without boundary; and $f : M \rightarrow M$ be a $C^{1}$-diffeomorphism. We say $f$ is 
\emph{partially hyperbolic}, if there exists a nontrivial $Df$-invariant
splitting $TM=E^{s}\bigoplus E^{c}\bigoplus E^{u}$ of the tangent bundle into stable, central, and unstable
distributions, such that all unit vectors $v^{\sigma}\in E^{\sigma}_{x} (\sigma=s, c, u)$ with $x \in M$ satisfy
$$\parallel D_{x}fv^{s} \parallel< \parallel D_{x}fv^{c} \parallel<\parallel D_{x}fv^{u} \parallel,$$
and
$$\parallel D_{x}f\mid_{E^{s}_{x}}\parallel< 1 \;\;\mbox{and} \;\;\parallel D_{x}f^{-1}\mid_{E^{u}_{x}}\parallel< 1,$$
for some suitable Riemannian metric on $M$. The stable distribution $E^{s}$ and unstable
distribution $E^{u}$ are integrable to the stable and unstable foliations $W^{s}$ and $W^{u}$
respectively such that $TW^{s} = E^{s}$ and $TW^{u} = E^{u}$ (cf. \cite{Hirsch}).

In this paper, we always work in the setting of $C^1$-smooth partially hyperbolic system $(M, f).$ 

\begin{defi}
Given a sequence of continuous functions $\mathcal{G} = \{\log g_{n}\}
_{n=1}^{\infty}$ on $M$, $\mathcal{G}$ is
called a sequence of sub-additive potentials of $f$ if 
$$\log g_{m+n}(x)\leq \log g_{n}(x)+\log g_{m}(f^{n}x), \forall x\in M, \forall m,n\in\mathbb{N}.$$

\end{defi}

\begin{rem}
For any $f$-invariant Borel probability measure $\mu$, set
$$\mathcal{G}_{*}(\mu)=\lim\limits_{n\rightarrow \infty}\frac{1}{n}\int\log g_{n}d\mu,$$
and $\mathcal{G}_{*}(\mu)$ is called the \emph{Lyapunov exponent} of $\mathcal{G}$ with respect to $\mu$. The existence of this limit follows from a sub-additive argument. It takes values in
$[-\infty, +\infty)$. Moreover, the Sub-additive Ergodic Theorem (see \cite{Walters2}, Theorem 10.1) implies that for an ergodic measure $\mu$, one has $$\mathcal{G}_{*}(\mu)=\lim\limits_{n\rightarrow \infty}\frac{1}{n}\log g_{n}(x), \mu-a.e.\,x.$$
\end{rem}

Next we recall some basic facts about unstable entropy (see \cite{Hu1}). Given any probability measure $\nu$ and any finite measurable partition $\eta$ of $M$, and denote by $\eta(x)$ the element of $\eta$ containing $x$.
The \emph{canonical system of conditional measures}
for $\nu$ and $\eta$ is a family of probability measures $\{\nu^{\eta}_{x} : x \in M\}$ with $\nu^{\eta}_{x}(\eta(x))= 1$,
such that for every measurable set $B \subset M, x \mapsto \nu^{\eta}_{x}(B)$ is measurable and
$$\nu(B) =\int_{X}\nu^{\eta}_{x}(B)d\nu(x).$$ A classical result of Rokhlin (cf.\,\cite{Rohlin}) says that if $\eta$ is a measurable partition,
then there exists a system of conditional measures with respect to $\eta$. It is essentially
unique in the sense that two such systems coincide for sets with full $\nu$-measure. For
measurable partitions $\alpha$ and $\eta$, let
$$H_{\nu}(\alpha|\eta):=-\int_{M}\log\nu^{\eta}_{x}(\alpha(x))d\nu(x).$$
denote the conditional entropy of $\alpha$ for given $\eta$ with respect to $\nu$.

Take $\epsilon_{0} > 0$
small. Let $\mathcal{P} = \mathcal{P}_{\epsilon_{0}}$ denote the set of finite Borel partitions $\alpha$ of $M$ whose elements
have diameters smaller than or equal to $\epsilon_{0}$, that is, $\mbox{diam}\;\alpha := \sup\{\mbox{diam}\;A : A \in
\alpha\} \leq \epsilon_{0}$. For each $\beta \in \mathcal{P}$ we can define a finer partition $\eta$  such that $\eta(x) =\beta(x) \cap W^{u}_{loc}(x)$ for each $x \in M$, where $W^{u}_{loc}(x)$ denotes the local unstable manifold
at $x$ whose size is greater than the diameter $\epsilon_{0}$ of $\beta$. Since $W^{u}$ is a continuous
foliation, $\eta$ is a measurable partition with respect to any Borel probability measure
on $M$. 

Let $\mathcal{P}^{u}$ denote the set of partitions $\eta$ obtained in this way and \emph{subordinate
to unstable manifolds}. Here a partition $\eta$ of $M$ is said to be subordinate to unstable
manifolds of $f$ with respect to a measure $\mu$ if for $\mu$-almost every $x, \eta(x) \subset W^{u}(x)$
and contains an open neighborhood of $x$ in $W^{u}(x)$. It is clear that if $\alpha \in \mathcal{P}$
satisfies $\mu(\partial\alpha) = 0$, where $\partial\alpha := \cup_{A\in \alpha}\partial A$, then the corresponding $\eta$ given by
$\eta(x) = \alpha(x) \cap W^{u}_{loc}(x)$ is a partition subordinate to unstable manifolds of $f$.

The unstable metric entropy in \cite{Hu1} is defined as follows.
\begin{defi} For any $\mu\in\mathcal{M}_f(M)$, any $\eta\in \mathcal{P}^u$, and any $\alpha\in\mathcal{P}$, define $$h_{\mu}(f, \alpha|\eta)=\limsup_{n\rightarrow\infty}\frac{1}{n}H_{\mu}(\alpha_{0}^{n-1}|\eta),$$ and $$h_{\mu}(f|\eta)=\sup\limits_{\alpha\in\mathcal{P}}h_{\mu}(f, \alpha|\eta).$$
The unstable metric entropy of $f$ is defined by
$$h^{u}_{\mu}(f)=\sup\limits_{\eta\in\mathcal{P}^{u}}h_{\mu}(f|\eta).$$

\end{defi}

We define unstable metric pressure for sub-additive potentials as follows.

Take any $\eta\in \mathcal{P}^{u}$. A subset $F \subseteq \overline{\eta(x)}$ is called an $(n,\epsilon,\gamma)\; u$-spanning set of $\eta(x)$, if 
$$\mu^{\eta}_{x} \left(\cup_{y\in F}B_{n}^{u}(y,\epsilon)\right)\geq 1-\gamma,$$
where $B^{u}_{n}(y,\epsilon) = \{z \in W^{u}(x) : d^{u}_{n}(y, z) \leq\epsilon\}$ is the $(n, \epsilon)\; u$-Bowen
ball around $y$. 

\begin{defi}\label{metric-pressure}
 For any $x\in M$, any $\eta\in \mathcal{P}^u$, any positive number $\gamma$, any natural number $n$, any sequence $\mathcal{G}$ of sub-additive potentials of $f$ on $M$, and any $\mu \in \mathcal{M}_{f}(M)$, set
$$\begin{aligned}
&P_{\mu}^{u}(f, \mathcal{G}, \epsilon, n, \eta(x), \gamma)\\
:=&\inf\left\{\sum_{y\in F}\sup_{z\in B_{n}^{u}(y,\epsilon)}g_{n}(z)\mid F\;\text{is an} \;(n, \epsilon, \gamma)\, u\text {-spanning subset of } \eta(x)\right\},
\end{aligned}$$
$$P_{\mu}^{u}\left(f, \mathcal{G}, \epsilon,\eta(x),\gamma\right):=\limsup\limits_{n\rightarrow\infty}\frac{1}{n}\log P_{\mu}^{u}(f, \mathcal{G}, \epsilon, n, \eta(x), \gamma),$$ and
$$
P_{\mu}^{u}\left(f, \mathcal{G},\eta(x), \gamma \right):=\lim\limits_{\epsilon\rightarrow 0}P_{\mu}^{u}(f, \mathcal{G}, \epsilon, \eta(x), \gamma).
$$
The unstable measure-theoretic pressure of $f$ with respect to $\mathcal{G}$ is defined by
$$P_{\mu}^{u}(f, \mathcal{G}):=\sup_{\eta\in\mathcal{P}^{u}}\int_{M}\lim_{\gamma\rightarrow 0}P_{\mu}^{u}\left(f, \mathcal{G}, \eta(x),\gamma \right)d\mu(x).$$
\end{defi}

\begin{rem}
For any continuous function $\varphi\in C(M)$, the corresponding sequence $\mathcal{G}=\{S_n\varphi(x)=\sum\limits_{i=0}^{n-1}\varphi(f^ix)\}$ is additive and hence sub-additive. We simply write $P_{\mu}^{u}(f, \mathcal{G})$ as $P_{\mu}^{u}(f, \varphi)$, which actually coincides with the classical definition.
\end{rem}

\section{unstable metric pressure equals unstable metric entropy plus \textit{Lyapunov exponent}}
In this section, we prove Theorem \ref{variational principle} in two steps. First we show the conclusion is true in the case of additive potentials. Second we prove Theorem \ref{variational principle} for sub-additive potentials, with some help of the previous case.

\subsection{The case of additive potentials.}
\begin{thm}\label{metric pressure formular}
For any $\varphi \in C(M,\mathbb{R})$ and $\mu \in \mathcal{M}^{e}_{f}(M)$, we have
$$P_{\mu}^{u}(f, \varphi)= h^{u}_{\mu}(f)+\int_{M}\varphi d\mu.$$
\end{thm}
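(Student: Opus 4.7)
The plan is to run a weighted Katok-style argument restricted to the unstable direction, combining the Katok formula for unstable metric entropy (from the framework of Hu--Hua--Wu \cite{Hu1}) with Birkhoff's ergodic theorem applied to the observable $\varphi$. Write $a:=\int_M\varphi\,d\mu$ and $h:=h_\mu^u(f)$, and fix $\eta\in\mathcal{P}^u$ and $\delta>0$. Two facts govern every estimate: uniform continuity of $\varphi$ on the compact manifold gives a modulus $\omega_\varphi$ with $|S_n\varphi(z)-S_n\varphi(y)|\leq n\,\omega_\varphi(\epsilon)$ whenever $z\in B_n^u(y,\epsilon)$; and by ergodicity the Birkhoff set
\[
A_N:=\bigl\{y\in M:\ |\tfrac{1}{n}S_n\varphi(y)-a|<\delta\text{ for all }n\geq N\bigr\}
\]
satisfies $\mu(A_N)\to 1$, hence $\mu^\eta_x(A_N)\to 1$ for $\mu$-a.e.\,$x$.

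For the upper bound, I would enlarge $A_N$ so it also records the unstable Shannon--McMillan--Breiman \emph{lower} estimate $\mu^\eta_y(B_n^u(y,\epsilon))\geq e^{-n(h+\delta)}$; this still has $\mu(A_N)\to 1$. For $\mu$-a.e.\,$x$, pick $N$ with $\mu^\eta_x(A_N)>1-\gamma$, and take a maximal $(n,\epsilon)$ $u$-separated subset $F\subset A_N\cap\eta(x)$. Then $F$ is $(n,2\epsilon,\gamma)$ $u$-spanning of $\eta(x)$, and disjointness of the half-size Bowen balls around $F$ together with the SMB lower bound yields $|F|\leq e^{n(h+\delta)}$. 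Since $F\subset A_N$ forces $S_n\varphi(y)\leq n(a+\delta)$ for each $y\in F$,
\[
\sum_{y\in F}\sup_{z\in B_n^u(y,2\epsilon)}e^{S_n\varphi(z)}\leq |F|\,e^{n(a+\delta+\omega_\varphi(2\epsilon))}\leq e^{n(h+a+2\delta+\omega_\varphi(2\epsilon))}.
\]
Applying $\limsup_{n}\tfrac{1}{n}\log$, then $\epsilon\to 0$, then $\gamma\to 0$, and finally $\delta\to 0$, followed by $\sup_\eta\int d\mu$, delivers $P_\mu^u(f,\varphi)\leq h+a$.

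For the lower bound, I would instead build the SMB \emph{upper} estimate $\mu^\eta_y(B_n^u(y,2\epsilon))\leq e^{-n(h-\delta)}$ into $A_N$. Given any $(n,\epsilon,\gamma)$ $u$-spanning set $F$ of $\eta(x)$, restrict to $F':=\{y\in F:B_n^u(y,\epsilon)\cap A_N\neq\emptyset\}$. A set-theoretic estimate shows $\bigcup_{y\in F'}B_n^u(y,\epsilon)$ still carries $\mu^\eta_x$-mass at least $1-2\gamma$. For each $y\in F'$, a witness $y'\in B_n^u(y,\epsilon)\cap A_N$ gives two things at once: $\sup_{z\in B_n^u(y,\epsilon)}e^{S_n\varphi(z)}\geq e^{S_n\varphi(y')}\geq e^{n(a-\delta)}$, and $B_n^u(y,\epsilon)\subset B_n^u(y',2\epsilon)$, hence $\mu^\eta_x(B_n^u(y,\epsilon))\leq e^{-n(h-\delta)}$. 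Summing,
\[
\sum_{y\in F}\sup_{z\in B_n^u(y,\epsilon)}e^{S_n\varphi(z)}\geq |F'|\,e^{n(a-\delta)}\geq (1-2\gamma)\,e^{n(h+a-2\delta)}.
\]
The same chain of limits then yields $\lim_{\gamma\to 0}P_\mu^u(f,\varphi,\eta(x),\gamma)\geq h+a$ for $\mu$-a.e.\,$x$, and integrating produces the matching lower bound.

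The main obstacle is precisely this lower bound: the centers of a near-optimal spanning family need not be Birkhoff-regular, and there is no direct control over the conditional measure of a Bowen ball whose center is ``bad''. The device of transferring from $y$ to a witness $y'\in A_N$ at the cost of doubling the Bowen radius, while simultaneously packaging both the Birkhoff condition and the scale-$2\epsilon$ SMB upper bound into the single set $A_N$, is what bridges the mismatch. The continuity error $\omega_\varphi(\epsilon)$ is absorbed by letting $\epsilon\to 0$ only after $n\to\infty$.
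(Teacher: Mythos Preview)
Your proof is correct, but it follows a route different from the one the paper takes for this particular theorem. The paper's argument for the additive case (its Lemmas~3.2 and~3.3) is partition-based: the upper bound uses the Shannon--McMillan--Breiman theorem for the refined partitions $\alpha_0^{n-1}$ (Theorem~B in \cite{Hu1}) to manufacture a spanning set out of partition elements, while the lower bound runs a genuine Katok combinatorial argument using the Hamming metric on $(\alpha,n)$-names to lower-bound the cardinality of any spanning set. You instead work directly with Bowen balls throughout, relying on the unstable Brin--Katok formula (Lemma~3.2 in \cite{Hu1}) for both the lower and the upper SMB estimates, and your witness-transfer trick (replacing a possibly bad center $y$ by $y'\in A_N\cap B_n^u(y,\epsilon)$ at the cost of doubling the Bowen radius) sidesteps the Hamming combinatorics entirely. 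Interestingly, your approach is essentially the one the paper itself adopts later in Section~4 (proofs of Lemmas~4.7 and~4.8) to handle the capacity and Bowen pressures, so you are in effect importing the Section~4 machinery back into Section~3. What your route buys is a shorter and more transparent proof of the lower bound; what the paper's Hamming-metric argument buys is a self-contained counting estimate that does not presuppose the Brin--Katok formula at fixed scale $\epsilon$.

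Two minor scale bookkeeping points worth tightening: in your upper bound, a maximal $(n,\epsilon)$ $u$-separated set in $A_N\cap\eta(x)$ is already $(n,\epsilon,\gamma)$ $u$-spanning (not merely $(n,2\epsilon,\gamma)$), and the disjointness-based cardinality bound requires the SMB \emph{lower} estimate at scale $\epsilon/2$ rather than $\epsilon$, so $A_N$ should be defined accordingly. Neither point affects the validity of the argument.
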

The proof of this theorem splits into the following two lemmas.
\begin{lem}\label{metric pressure formular 1}
For any $\varphi \in C(M,\mathbb{R})$ and $\mu \in \mathcal{M}^{e}_{f}(M)$, we have
$$P_{\mu}^{u}(f, \varphi)\leq h^{u}_{\mu}(f)+\int_{M}\varphi d\mu.$$
\end{lem}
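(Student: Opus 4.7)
The plan is to adapt Katok's spanning-set construction to the unstable setting, using the unstable Shannon-McMillan-Breiman theorem of Hu-Hua-Wu together with Birkhoff's ergodic theorem and the uniform continuity of $\varphi$. The broad idea is that atoms of an iterated partition naturally sit inside $u$-Bowen balls, so if we can control both the number of such atoms and the average of $S_n\varphi$ on them, we can read off an upper bound for the pressure directly from the definition.

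First I would fix $\eta \in \mathcal{P}^u$ and small parameters $\delta,\gamma,\epsilon > 0$, and choose a partition $\alpha \in \mathcal{P}$ with $\mu(\partial\alpha)=0$, $\operatorname{diam}(\alpha) < \epsilon$, and $h_\mu(f,\alpha\mid\eta) \geq h_\mu(f\mid\eta) - \delta$. Because the unstable foliation is continuous, on small scales the intrinsic unstable distance is comparable to the ambient distance, so atoms of $\alpha_0^{n-1}$ intersected with $\overline{\eta(x)}$ are contained in $u$-Bowen balls $B_n^u(y,K\epsilon)$ for a uniform constant $K$. Next I would apply the unstable Shannon-McMillan-Breiman theorem from \cite{Hu1} to get, for $\mu$-a.e.\ $x$,
\[
-\tfrac{1}{n}\log \mu^{\eta}_{x}\bigl(\alpha_0^{n-1}(x)\cap\eta(x)\bigr) \longrightarrow h_\mu(f,\alpha\mid\eta),
\]
and simultaneously apply Birkhoff's theorem to get $\tfrac{1}{n}S_n\varphi \to \int\varphi\,d\mu$; then Egorov's theorem produces a set $E$ with $\mu(E)>1-\gamma/2$ on which both convergences are uniform.

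Now for large $n$ and for $x$ in a large-measure set, I would count atoms $A$ of $\alpha_0^{n-1}\vee \eta$ that meet $\eta(x)\cap E$: the uniform lower bound $\mu^{\eta}_{x}(A)\geq e^{-n(h_\mu(f,\alpha\mid\eta)+\delta)}$ together with $\mu^{\eta}_{x}(\eta(x)\cap E)\geq 1-\gamma$ (a conditional Fubini statement) bounds their number by $e^{n(h_\mu(f,\alpha\mid\eta)+\delta)}$ and ensures they cover all but $\gamma$ of $\mu^{\eta}_{x}$. Selecting one representative $y_A\in A\cap E$ per atom yields an $(n,K\epsilon,\gamma)$ $u$-spanning set $F$ of $\eta(x)$. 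For each $y\in F$, uniform continuity gives $\sup_{z\in B_n^u(y,K\epsilon)}S_n\varphi(z)\leq S_n\varphi(y)+n\cdot\operatorname{var}(\varphi,2K\epsilon)$, and membership $y\in E$ gives $S_n\varphi(y)\leq n(\int\varphi\,d\mu+\delta)$. Summing,
\[
P_\mu^{u}(f,\varphi,K\epsilon,n,\eta(x),\gamma)\leq |F|\cdot e^{n(\int\varphi\,d\mu+\delta+\operatorname{var}(\varphi,2K\epsilon))}\leq e^{n(h_\mu(f,\alpha\mid\eta)+2\delta+\int\varphi\,d\mu+\operatorname{var}(\varphi,2K\epsilon))}.
\]
Taking $\tfrac{1}{n}\log$, then $\limsup_{n\to\infty}$, then $\epsilon\to 0$, $\gamma\to 0$, $\delta\to 0$, and finally $\sup_{\eta\in\mathcal{P}^u}$, yields the desired inequality.

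The main obstacle I expect is the geometric bookkeeping in the first paragraph: verifying that each atom of $\alpha_0^{n-1}$ restricted to a local unstable leaf actually lies in a single $u$-Bowen ball of radius $O(\epsilon)$, and making sure that the constant $K$ is absorbed cleanly when $\epsilon \to 0$. A secondary delicacy is the conditional Fubini step used to pass from a large-measure set $E\subset M$ to a set of $x$ for which $\mu^{\eta}_{x}(E\cap\eta(x))$ is close to one; once both issues are handled, the counting and summation are straightforward.
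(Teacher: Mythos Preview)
Your proposal is correct and follows essentially the same route as the paper: use the unstable Shannon--McMillan--Breiman theorem from \cite{Hu1} to bound the number of atoms of $\alpha_0^{n-1}$ needed to cover a large-$\mu^\eta_x$-measure set, observe that each such atom sits in a $u$-Bowen ball of radius $O(\epsilon)$ via the local comparability of $d$ and $d^u$, and use Birkhoff plus Egorov to control $S_n\varphi$ on the representatives. The only notable difference is that the paper invokes Theorem~B of \cite{Hu1} in its strong form, which gives $-\tfrac{1}{n}\log\mu^\eta_x(\alpha_0^{n-1}(y))\to h^u_\mu(f)$ directly (independently of $\alpha$), so your preliminary choice of $\alpha$ with $h_\mu(f,\alpha\mid\eta)\geq h_\mu(f\mid\eta)-\delta$ is unnecessary, though harmless; the paper also absorbs your constant $K$ at the outset by taking $\operatorname{diam}\alpha<\epsilon/(2C)$.
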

\begin{proof}

Given any $\epsilon>0$, any $0<\gamma<1$, any large $n\in\mathbb{N}$, any $\rho>0$, and any $\eta\in \mathcal{P}^u$.
Let us choose a finite partition $\alpha$ of $M$ such that the diameter of $\alpha$ is less than $\epsilon/2C$, where $C>1$ satisfies $$d(y, z)\leq d^u(y, z)\leq Cd(y, z)\;\text{for any}\;y, z\in \eta(x).$$  
Since $\mu$ is ergodic, according to the Theorem B in \cite{Hu1}, one has $$\lim\limits_{n\to\infty}-\frac{1}{n}\log\mu^{\eta}_{x}(\alpha^{n-1}_{0}(x))=h^u_{\mu}(f), \;\;\;\mu-a.e.\,x\in M.$$ Hence for $\mu-a.e. \;x\in M$, one has $$\lim\limits_{n\to\infty}-\frac{1}{n}\log\mu^{\eta}_{x}(\alpha^{n-1}_{0}(y))=h^u_{\mu}(f), \;\;\;\mu^{\eta}_x-a.e.\,y\in\eta(x).$$ Then for $\mu^{\eta}_{x}-a.e.\,y$, there exists an $N(y, \rho)>0$ such that if $n\geq N(y, \rho)$, then $$\mu^{\eta}_{x}(\alpha^{n-1}_{0}(y))\geq e^{-n(h^u_{\mu}(f)+\rho)}.$$ Set $$E_n=\{y\in\eta(x)\mid N(y, \rho)\leq n\},$$ then $$\mu^{\eta}_{x}(\bigcup^{\infty}_{n=1}E_n)=1.$$ So $\mu^{\eta}_x(E_n)\geq1-\gamma/2$ if $n$ is large enough. Then it is easy to see that $E_n$ intersects at most $e^{n(h^u_{\mu}(f)+\rho)}$ members of $\alpha^{n-1}_{0}$ and can be covered by the same number of $(n, \epsilon)\;u$-Bowen balls. If we take a point from each member of $\alpha^{n-1}_0\cap E_n$, then it is clear that they contribute to an $(n, \epsilon)\;u$-spanning set $F_n$ of $E_n$. Moreover, $$|F_n|\leq e^{n(h^u_{\mu}(f)+\rho)}\quad\quad(*).$$

On the other hand, according to the Birkhoff's ergodic theorem, one has
 $$\lim_{n\rightarrow\infty}\dfrac{1}{n}S_{n}\varphi(y)=\int_{M}\varphi d\mu, \;\;\;\mu-a.e.\;y.$$
By the Egoroff's Theorem, there is a measurable set $B$ with $\mu(B)\geq1-\gamma/2$, and $(1/n)S_n\varphi$ converges uniformly to $\int_{M}\varphi d\mu$ on $B$.

So if one can take $n$ to be further large enough, and set $E=B\cap E_n$, then $\mu_x^{\eta}(E)>1-\gamma$; moreover, $$\dfrac{1}{n}S_{n}\varphi(y)\leq\int_{M}\varphi d\mu+\rho,\;\text{for all}\;y\in E.$$

Take $F$ to be an $(n, \epsilon)\, u$-spanning set of $E$ with the smallest cardinality, then $|F|\leq e^{n(h^u_{\mu}(f)+\rho)}$ based on $(*)$. Then for any $z\in F$, there is a $y(z)\in E$ such that $d^u_n(z, y(z))<\epsilon$. Therefore,
\begin{align*}
\sum_{z\in F}\exp(S_{n}\varphi)(z)\leq &\sum_{z\in F}\exp((S_{n}\varphi)(y(z))+n\tau_{\epsilon})\notag\\
\leq& \sum_{z\in F}\exp(n(\int_{M}\varphi d\mu+\rho)+n\tau_{\epsilon})\notag\\
\leq& \exp n(h^u_{\mu}(f)+\int_{M}\varphi d\mu+2\rho+\tau_{\epsilon}),
\end{align*}
where $\tau_{\epsilon}=\sup\{|\varphi(x)-\varphi(y)|:d(x,y)<\epsilon\}$.
Then,
$$P^{u}_{\mu}(f,\varphi,\epsilon,\eta(x),\gamma)\leq h^u_{\mu}(f)+\int_{M}\varphi d\mu+2\rho+\tau_{\epsilon}.$$
Let $\gamma\rightarrow0$ and $\epsilon\rightarrow0$ (hence $\tau_{\epsilon}\rightarrow0$), since $\rho>0$ is arbitrary, we obtain
$$P^{u}_{\mu}(f,\varphi)\leq h^u_{\mu}(f)+\int_{M}\varphi d\mu.$$

\end{proof}

\begin{lem}\label{metric pressure formular 2}
For any $\varphi \in C(M,\mathbb{R})$ and $\mu \in \mathcal{M}^{e}_{f}(M)$, we have
$$P_{\mu}^{u}(f, \varphi)\geq h^{u}_{\mu}(f)+\int_{M}\varphi d\mu.$$
\end{lem}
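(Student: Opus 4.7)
The approach is an unstable Katok-type lower bound. Fix $\eta\in\mathcal{P}^u$, a partition $\alpha\in\mathcal{P}$ with $\mu(\partial\alpha)=0$, and small parameters $\rho,\gamma,\epsilon>0$. For $\epsilon$ small enough (depending on $\alpha$ and $\rho$) one has $\mu(\alpha_{2\epsilon})<\rho$, where $\alpha_{2\epsilon}$ denotes the $2\epsilon$-neighbourhood of $\partial\alpha$. Applying the unstable Shannon--McMillan--Breiman theorem of \cite{Hu1} to $\alpha$, Birkhoff's theorem to $\varphi$, and Birkhoff's theorem to $\mathbbm{1}_{\alpha_{2\epsilon}}$, together with Egoroff's theorem in the conditional setting, I would produce for $\mu$-a.e.\,$x$ a measurable set $E\subseteq\eta(x)$ with $\mu^\eta_x(E)\geq 1-\gamma$ such that for every $y\in E$ and all sufficiently large $n$: (a) $\mu^\eta_x(\alpha_0^{n-1}(y))\leq e^{-n(h_\mu(f,\alpha|\eta)-\rho)}$; (b) $S_n\varphi(y)\geq n\bigl(\int\varphi\,d\mu-\rho\bigr)$; and (c) $\#\{0\leq i<n:f^iy\in\alpha_{2\epsilon}\}\leq 2n\rho$.

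Now let $F$ be any $(n,\epsilon,\gamma)\,u$-spanning subset of $\eta(x)$, and write $F':=\{y'\in F:B_n^u(y',\epsilon)\cap E\neq\emptyset\}$. For each $y'\in F'$ pick a point $z(y')\in B_n^u(y',\epsilon)\cap E$. Using $d\leq d^u$, any $w\in B_n^u(y',\epsilon)$ satisfies $d(f^iz(y'),f^iw)\leq 2\epsilon$ for $0\leq i<n$, and by (c) the orbits of $z(y')$ and $w$ share their $\alpha$-label at all but at most $2n\rho$ times. Hence $B_n^u(y',\epsilon)\cap E$ is contained in at most $\binom{n}{2n\rho}|\alpha|^{2n\rho}$ atoms of $\alpha_0^{n-1}$. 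Together with (a) and the covering inequality $\mu^\eta_x\bigl(E\cap\bigcup_{y'\in F'}B_n^u(y',\epsilon)\bigr)\geq 1-2\gamma$ this yields
$$|F'|\geq (1-2\gamma)\binom{n}{2n\rho}^{-1}|\alpha|^{-2n\rho}e^{n(h_\mu(f,\alpha|\eta)-\rho)},$$
and by (b) applied at each $z(y')$,
$$\sum_{y'\in F}\sup_{w\in B_n^u(y',\epsilon)}\!\!e^{S_n\varphi(w)}\geq |F'|\,e^{n(\int\varphi\,d\mu-\rho)}.$$
Taking $(1/n)\log$ and letting successively $n\to\infty$, $\epsilon\to 0$, $\gamma\to 0$, and $\rho\to 0$ with $\alpha,\eta$ fixed (all error terms, including $\tfrac1n\log\binom{n}{2n\rho}\to H(2\rho)$ and $2\rho\log|\alpha|$, vanishing in the limit) gives $\int_M\lim_{\gamma\to 0}P^u_\mu(f,\varphi,\eta(x),\gamma)\,d\mu(x)\geq h_\mu(f,\alpha|\eta)+\int\varphi\,d\mu$. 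Finally, taking supremum over $\alpha\in\mathcal{P}$ and $\eta\in\mathcal{P}^u$ yields $P^u_\mu(f,\varphi)\geq h^u_\mu(f)+\int\varphi\,d\mu$.

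The main obstacle I foresee is the combinatorial step bounding how many atoms of $\alpha_0^{n-1}$ a single $(n,\epsilon)\,u$-Bowen ball, intersected with $E$, can meet. This requires a clean treatment of the interplay between $d$ and $d^u$ on $\eta(x)$ (with the comparison constant $C$ from the proof of Lemma \ref{metric pressure formular 1} absorbed into the mesh of $\alpha$), and a choice of $\alpha$ for which simultaneously $\mu(\partial\alpha)=0$ and $\mu(\alpha_{2\epsilon})$ is small for small $\epsilon$. A subsidiary technicality is to ensure that the good set $E$ has large \emph{conditional} measure $\mu^\eta_x(E)$ rather than large $\mu$-measure, which forces Egoroff to be applied after disintegrating the SMB and Birkhoff a.e.\,statements along $\eta$.
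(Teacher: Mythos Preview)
Your proposal is correct and follows essentially the same Katok--Hamming strategy as the paper: both arguments fix $\alpha\in\mathcal{P}$ with $\mu(\partial\alpha)=0$, control orbit visits to a thin neighbourhood of $\partial\alpha$ via Birkhoff, invoke the unstable SMB theorem of \cite{Hu1} for the atom measures, and then use the combinatorial count of how many $\alpha_0^{n-1}$-atoms a single $(n,\epsilon)\,u$-Bowen ball can meet to bound the cardinality of any spanning set from below. The paper phrases this combinatorial step through the Hamming semi-metric and the ball cardinality $B(s/2,|\alpha|,n)$, whereas you use the equivalent bound $\binom{n}{2n\rho}|\alpha|^{2n\rho}$; in fact, since the ``bad'' times are determined by the fixed point $z(y')\in E$, the binomial factor is even unnecessary and $|\alpha|^{2n\rho}$ already suffices. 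The only other organizational difference is that the paper keeps the three good sets (SMB, Birkhoff for $\varphi$, boundary visits) separate while you merge them into a single $E$; your remark about needing large \emph{conditional} measure $\mu^\eta_x(E)$ is well taken and is exactly the passage the paper handles at the line marked~$(*)$.
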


\begin{proof} For any $\epsilon>0$, any natural number $n$, any $\eta\in\mathcal{P}^u$, and any $0<\gamma<1$, we first give a lower bound for the minimal cardinality $\Sc$ of $(n, \epsilon, \gamma)\;u$-spanning sets of $\eta(x)$. 

Let us recall some facts
about the Hamming metric.
For positive integers $N$ and $n$, let us set
$$\omega_{N,n}=\left\{\omega=(\omega_{0},\cdots\cdots,\omega_{n-1})\mid \omega_{i}\in\{1,\cdots\cdots,N\}, 0\leq i \leq n-1\right\}.$$
The Hamming metric $\rho^{H}_{N,n}$ on $\omega_{N,n}$ is defined by $$\rho^{H}_{N,n}(\omega,\bar{\omega})=\dfrac{1}{n}\sum^{n-1}_{i=0}(1-\delta_{\omega_{i}\bar{\omega_{i}}}),$$
where $\delta_{kl}$ is the Kronecker symbol.

For $\omega\in\omega_{N,n}$, $r>0$, we denote by $B^{H}(\omega,r)$ the closed $r$-ball in the metric $\rho^{H}_{N,n}$  with
the center at $\omega$. The standard combinatorial arguments show that the number of points in $B^{H}(\omega,r)$, say $B(r, N, n),$ depends only on $r$, $N$, $n$ (not on $\omega$), and equals
$$B(r,N,n)=\sum^{[nr]}_{m=0}(N-1)^{m}C^{m}_{n}.$$
By the Stirling's formula, if $0< r< (N-1)/N$, then it is easy to see that
\begin{equation}\label{Stirling's formula}
\lim\limits_{n\rightarrow \infty}\dfrac{\log B(r,N,n)}{n}=r\log(N-1)-r\log r-(1-r)\log(1-r).
\end{equation} 

For any $y\in M$ and $\mathcal{P}_{\epsilon_{0}}\ni\alpha=\{A_1, \cdots\cdots, A_N\}$, set
$$\omega_{y,n}=\left\{\omega=(\omega_{0},\cdots\cdots,\omega_{n-1})\mid\omega_{i}\in\{1,\cdots\cdots,N\}, f^{i}y\in A_{\omega_{i}}, 0\leq i\leq n-1\right\}.$$
Now we define a semi-metric $d^{\alpha}_{n}$ on $M$ by
$$d^{\alpha}_{n}(y,z):=\rho^{H}_{N,n}(\omega_{y,n},\omega_{z,n})=\dfrac{1}{n}\sum^{n-1}_{i=0}(1-\delta_{\omega_{i}\bar{\omega_{i}}}).$$

Now for every $\epsilon>0$, 
set
$$\partial_{\epsilon}(\alpha)=\bigcup\limits_{A\in \alpha}\partial_{\epsilon}(A),$$ where $$\partial_{\epsilon}(A)=\{y\in A:\,\text{there exists a} \;z\in M\setminus A\;\text{such that}\,d(y,z)<\epsilon\}.$$ Since $\bigcap\limits_{\epsilon>0}\partial_{\epsilon}(\alpha)=\partial \alpha$, one has $\lim\limits_{\epsilon\to 0}\mu(\partial_{\epsilon}(\alpha))=\mu(\partial\alpha)$. (Moreover, we can assume that the measure $\mu$ is everywhere dense in $M$, $i.e.$, the measure
of any non-empty open subset of $M$ is positive.)

Let us focus on those partition $\alpha\in\mathcal{P}_{\epsilon_{0}}$ with $\mu(\partial\alpha)=0$. For any $s>0$, if $\epsilon$ is small enough, then $\mu(\partial_{\epsilon}(\alpha))<s^{2}/4$. 
If $y, z\in M$ and $d_{n}(y,z)<\epsilon$,
then for every $0\leq i\leq n-1$ either $f^{i} y$ and $f^{i} z$ belong to the same member
of $\alpha$, or both of them belong to $\partial_{\epsilon}(\alpha)$.
Let us denote for brevity the characteristic
function on $\partial_{\epsilon}(\alpha)$ by $\chi_{\epsilon}$ and set
$$B_{n, s}=\{y\in M\mid\sum^{n-1}_{i=0}\chi_{\epsilon}(f^{i} y)<\dfrac{ns}{2}\}.$$
Since $\int_{M}\chi_{\epsilon}d\mu<s^{2}/4$ and $f$ preserves the measure $\mu$, we have
\begin{align}\label{(10)}
\dfrac{ns^{2}}{4}&\geq\int_{M}\sum^{n-1}_{i=0}\chi_{\epsilon}(f^{i}y)d\mu\notag\\
&\geq\int_{M\setminus B_{n, s}}\sum^{n-1}_{i=0}\chi_{\epsilon}(f^{i}y)d\mu\notag\\
&\geq\dfrac{ns}{2}\mu\left(M\setminus B_{n,s}\right),
\end{align}
and so $\mu(B_{n,s})>1-s/2$. Hence for $\mu-a.e.\,x\in M$, one has $$\mu^{\eta}_x(B_{n, s}\cap \eta(x))>1-s/2.\quad (*)$$
If $y\in B_{n, s}$ and $d_{n}(y,z)<\epsilon$, then $d_{n}^{\alpha}(y, z)<s/2$. In other words, any intersection of an $\epsilon$-ball
in the metric $d_{n}$ with the set $B_{n, s}$ is contained in some $\epsilon/2$-ball in the semi-metric $d_{n}^{\alpha}(y, z)$. 

Since $\mu$ is ergodic, according to Theorem B in \cite{Hu1}, one has
 $$\lim_{n \to \infty}-\dfrac{1}{n}\log\mu_{x}^{\eta}(\alpha^{n-1}_{0}(x))= h^u_{\mu}(f), \;\mu-a.e.\,x.$$ Then for $\mu-a.e.\,x \in M$, one has $$\lim\limits_{n\to\infty}-\frac{1}{n}\log\mu^{\eta}_{x}(\alpha^{n-1}_{0}(y))=h^u_{\mu}(f), \;\;\;\mu^{\eta}_x-a.e.\,y\in\eta(x)$$ since $\mu^{\eta}_x=\mu^{\eta}_y$. Therefore, for $\mu-a.e.\;y$, there exists a $N(y)=N(y,\rho)>0$ such that if $n\geq N(y)$, then
$$\mu^{\eta}_{x}(\alpha^{n-1}_{0}(y))\leq e^{-n(h^u_{\mu}(f|\eta)-\rho)}.$$
Denote by $E_n=E_n(\rho)=\left\{y\in \eta(x)\mid N(y,\rho)\leq n\right\}$, then $E_n\subseteq E_{n+1}$ and $\mu(\cup_{n=1}^{\infty}E_n)=1$. So for each $\gamma>0$, there exists an $N$, such that $\mu^{\eta}_{x}(E_N)\geq1-\gamma$.

Now for each $x\in E_N$ with $(*)$ being true, consider a system $\mathfrak{U}$ of $S$ $\rho$-balls in the $d^{u}_{n}$-metric, such that these balls cover a subset $F_{n}\subseteq\eta(x)$ with $\mu^{\eta}_{x}(F_{n})\geq 1-\gamma$ (note that $S=\Sc$). In other words,
$$
\mathfrak{U}:=\left\{B^{u}_{n}(y_{i},\epsilon), 1\leq i\leq S\mid F_{n}\subseteq\bigcup\limits^{S}_{i=1}B^{u}_{n}(y_{i},\epsilon)
\,\text{and}\,\mu^{\eta}_{x}(F_{n})\geq 1-\gamma \right\}.$$
Then $$\mu^{\eta}_{x}(F_{n}\cap B_{n, s})\geq1-\gamma-s/2.$$
Suppose that $s<1-\gamma$, then $\mu^{\eta}_{x}(F_{n}\cap B_{(n, s)})>(1-\gamma)/2$. Since every ball $B^u_{y_i, \epsilon}$ is contained in $B_{n}(y_i, \epsilon)$, we claim that the intersection of every ball of $\mathfrak{U}$ with $B_{n, s}$ is contained in some $s/2$-ball in $d_{n}^{\alpha,u}$. Then there exist $S^{u}(f, \rho, \eta(x), \delta, \gamma)$ balls of radius $s/2$ in the metric $d_{n}^{\alpha,u}$, which cover the set $F_n\cap B_{n, s}$ whose $\mu^{\eta}_x$-measure is greater than $(1-\gamma)/2$.

To be precise, set $$P(n,y):=\left(\alpha(y),\alpha(fy),\alpha(f^{2}y),\cdots,\alpha(f^{n-1}y)\right),$$
we call $P(n,y)$ the $(\alpha,n)$-path of $y$. Suppose $V\in\alpha^{n-1}_{0}$, it is  obvious that for any two points $y, z\in V$, $P(n,y)=P(n,z)$, denote it by $P(n,V)$. Set $$B^{\mathfrak{U}}_{\frac{s}{2}}(y_{i}):=\{V\in \alpha^{n-1}_{0}\mid d_{n}^{\alpha,u}(P(n,V),P(n,y_{i}))<\frac{s}{2}\},$$
where $y_{i},\,i=1,2,\cdots,S^{u}(f, \rho, n, \eta(x), \gamma)$ are the centers of the balls in $\mathfrak{U}$. These are the $s/2$-balls we claimed. 

While for sufficiently large $n$,  some subset of the set $F_{n}\cap B_{n, s}$ with measure greater than $(1-\gamma)/4$ consists of elements of $\alpha^{n-1}_{0}\cap\eta(x)$ and the measure of such an element is less than
$e^{-n(h^u_{\mu}(f)-\rho)}$ by the conclusion before.
Consequently, the number of such elements is more than $(1-\gamma)e^{n(h^u_{\mu}(f)-\rho)}/4.$

Set $$B^{\mathfrak{U}}_{\frac{s}{2}}=\bigcup\limits^{S^{u}(f, \rho, n, \eta(x), \gamma)}_{i=1}B^{\mathfrak{U}}_{\frac{s}{2}}(y_{i}),$$ note that cardinality of each $B_{\frac{s}{2}}^{\mathfrak{U}}(y_i)$ is at most $B(\frac{s}{2}, |\alpha|,n)$, then
$$\text{Card}(B^{\mathfrak{U}}_{\frac{\epsilon}{2}})\leq S^{u}(f, \rho, n, \eta(x), \gamma)\cdot B(\frac{s}{2}, |\alpha|,n).$$
Thus we have
$$S^{u}(f, \rho, n, \eta(x), \gamma)\cdot B(\frac{s}{2}, |\alpha|,n)\geq\dfrac{(1-\gamma)e^{n(h^u_{\mu}(f)-\rho)}}{4}.$$

On the other hand,
since $\mu$ is ergodic, according to the Birkhoff's ergodic theorem, one has
 $$\lim_{n\rightarrow\infty}\dfrac{1}{n}S_{n}\varphi(y)=\int_{M}\varphi d\mu, \;\mu-a.e.\,y.$$
Hence for any $\lambda>0$ and $\mu-a.e.\;y$, there exists a $N(y)=N(y,\lambda)>0$ such that if $n\geq N(y)$, then
$$\dfrac{1}{n}S_{n}\varphi(y)\geq\int_{M}\varphi d\mu-\lambda.$$

 Set $H_n=H_n(\lambda)=\{y\in M\mid N(y,\lambda)\leq n \}$, then $H_n\subseteq H_{n+1}$ and $\mu(\cup_{n=1}^{\infty}H_n)=1$. So there exists an $N>0$ large enough such that $\mu(H_N)>1-\gamma/2$. Let $A_x$ be a subset of $\eta(x)$ with $\mu^{\eta}_x(A_x)>1-\gamma/2$ and $F'$ be an $(n, \epsilon)\;u$-spanning set of $A_x$ with cardinality $\Sc$. Set $A=A_x\cap H_N$, then $\mu^{\eta}_x(A)>1-\gamma$. Let $F\subseteq F'$ be an $(n, \epsilon)\;u$-spanning set of $A$ with smallest cardinality. Then for any $z\in F$, there exists $y(z)\in A$ such that $d^u_n(z, y(z))<\epsilon$.

Therefore,
$$ \begin{aligned}
\sum_{z\in F}\exp(S_{n}\varphi)(z)&\geq \sum_{z\in F}\exp((S_{n}\varphi)(y(z))-n\tau_{\epsilon})\notag\\
&\geq \sum_{z\in F}\exp(n(\int_{M}\varphi d\mu-\lambda)-n\tau_{\epsilon})\notag\\
&\geq \dfrac{(1-\gamma)e^{n(h^u_{\mu}(f)-\rho)}}{4B(\frac{s}{2}, |\alpha|, n)}\exp(n(\int_{M}\varphi d\mu-\lambda)-n\tau_{\epsilon}),
\end{aligned}$$
where $\tau_{\epsilon}:=\sup\{|\varphi(x)-\varphi(y)|:d(x,y)<\epsilon\}$.
Therefore,
$$P^{u}_{\mu}(f,\varphi,\rho,\eta(x),\gamma)\geq h^u_{\mu}(f)+\int_{M}\varphi d\mu-\lambda-\rho-\tau_{\epsilon}-O(s),$$ where $O(s)=\frac{s}{2}\log(N-1)-\frac{s}{2}\log\frac{s}{2}-(1-\frac{s}{2})\log(1-\frac{s}{2})$.
Since $\lambda, \rho, s, \epsilon$ are arbitrarily small, let them tend to $0$ (and hence $\tau_{\epsilon}\rightarrow0$ and $O(s)\to0$), we obtain
$$P^{u}_{\mu}(f,\varphi)\geq h^u_{\mu}(f,\alpha|\eta)+\int_{M}\varphi d\mu.$$

\end{proof}

\subsection{The case of sub-additive potentials--a proof of Theorem \ref{variational principle} }

\begin{lem}\label{lemma 2.10} 
Let $f : M \rightarrow M$ be a $C^{1}$-smooth partially hyperbolic diffeomorphism and $\mathcal{G} = \{\log g_{n}\}
_{n=1}^{\infty}$ be a sequence of sub-additive potentials of $f$. 
For any positive integer $l$ and small number $\rho>0$, there exists an $\epsilon_{0}>0$ such that for any $0<\epsilon<\epsilon_{0}$, the following inequality holds:
$$\sup_{z\in B^{u}_{n}(y,\epsilon)}\log g_{n}(z)\leq\sum^{n-1}_{i=0}\frac{1}{l}\log g_{l}(f^{i}y)+n\rho+C, \forall n, \forall y\in M,$$
where $B^{u}_{n}(y,\epsilon) = \{z \in W^{u}(x) : d^{u}_{n}(y, z) \leq\epsilon\}$ is the $(n, \epsilon)\; u$-Bowen
ball around $y$ and $C$ is a constant independent of $\rho$ and $\epsilon$.
\end{lem}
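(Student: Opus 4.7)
The plan is to combine iterated sub-additivity with the uniform continuity of the single continuous function $\log g_l$ on the compact manifold $M$.

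The first step is to establish a pointwise inequality
$$\log g_n(z) \le \frac{1}{l}\sum_{i=0}^{n-1}\log g_l(f^i z) + C$$
valid for every $z \in M$ and every $n$, with $C$ depending only on $l$ and on $\mathcal{G}$. To see this, fix $l$, and for each $j \in \{0,1,\ldots,l-1\}$ write $n - j = k_j l + r_j$ with $0 \le r_j < l$. Iterated application of sub-additivity yields
$$\log g_n(z) \le \log g_j(z) + \sum_{s=0}^{k_j - 1}\log g_l(f^{j+sl}z) + \log g_{r_j}(f^{j+k_j l}z),$$
with the convention that $\log g_0 = 0$. Summing these $l$ inequalities over $j$ and dividing by $l$, the extremal terms $\log g_j$ and $\log g_{r_j}$ together contribute a quantity bounded by a constant depending only on $l$, since $\log g_1,\ldots,\log g_{l-1}$ are continuous on the compact manifold $M$ and hence uniformly bounded. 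In the remaining double sum $\sum_{j=0}^{l-1}\sum_{s=0}^{k_j-1}\log g_l(f^{j+sl}z)$, each index $i = j + sl$ appears at most once as $(j,s)$ varies, and within every residue class modulo $l$ one picks up all indices starting from its smallest representative, so at most $l$ indices in $\{0,1,\ldots,n-1\}$ are missing; the corresponding omitted terms contribute an error bounded by $l\,\sup_M|\log g_l|$. All these bounded corrections are absorbed into $C$.

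Next I would invoke uniform continuity of $\log g_l$. Given $\rho > 0$, choose $\epsilon_0 > 0$ so that $d(u,v) < \epsilon_0$ implies $|\log g_l(u) - \log g_l(v)| < l\rho$. Since the ambient metric $d$ is dominated by the intrinsic unstable metric $d^u$ along an unstable leaf, for every $0 < \epsilon < \epsilon_0$ and every $z \in B^u_n(y,\epsilon)$ one has $d(f^i z, f^i y) \le \epsilon < \epsilon_0$ for $0 \le i \le n-1$. Consequently
$$\frac{1}{l}\sum_{i=0}^{n-1}\log g_l(f^i z) \le \frac{1}{l}\sum_{i=0}^{n-1}\log g_l(f^i y) + n\rho.$$
Substituting this into the pointwise bound from the first step and taking the supremum over $z \in B^u_n(y,\epsilon)$ yields the desired inequality, with $C$ independent of $\rho$ and $\epsilon$.

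The main obstacle is really only the bookkeeping in the first step: one must carefully verify that the averaged double sum $\frac{1}{l}\sum_{j=0}^{l-1}\sum_{s=0}^{k_j-1}\log g_l(f^{j+sl}z)$ recovers $\frac{1}{l}\sum_{i=0}^{n-1}\log g_l(f^i z)$ up to a uniformly bounded correction. Once that combinatorial accounting is carried out, the remainder of the proof is an immediate appeal to uniform continuity of the single continuous function $\log g_l$ on the compact manifold $M$.
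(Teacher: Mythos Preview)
Your proposal is correct and is precisely the standard argument the paper is invoking when it cites Lemma~2.2 of \cite{Hu}: iterated sub-additivity with the averaging trick over residue classes modulo $l$, followed by uniform continuity of $\log g_l$ on the compact manifold. The only additional ingredient the paper mentions is the equivalence of $d^u$ and $d$ on local unstable leaves, which you implicitly use (and correctly so, since $d\le d^u$ always holds on a submanifold).
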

\begin{proof}
Note that the distance $d^u$ on the unstable manifold is equivalent to the Riemannian metric $d$ (see the observation in front of Proposition 2.4 of \cite{Zhu2} ), so any unstable local neighborhood $\overline{W^u(x, \delta)}$ is compact under $d^u$. Then one can get the desired result using a similar argument of Lemma 2.2 of \cite{Hu}.

\end{proof}

\textbf{Now we proceed to prove Theorem \ref{variational principle}.}

\begin{proof}

 First we prove $h^{u}_{\mu}(f)+\mathcal{G}_{*}(\mu)\geq P_{\mu}^{u}(f, \mathcal{G})$.

For any positive integer $l$ and any $\rho>0$,  by Lemma \ref{lemma 2.10}, there is a constant $C$ such that if $\epsilon$ is small enough, one has
$$
\begin{aligned}
&P_{\mu}^{u}(f, \mathcal{G}, \epsilon, n, \eta(x), \gamma)\\
=&\inf\left\{\sum_{y\in F}\sup_{z\in B_{n}^{u}(y,\epsilon)}g_{n}(z)\mid F\;\text{is an} \;(n, \epsilon, \gamma)\, u\text {-spanning subset of } \eta(x)\right\}\\
=&\inf\left\{\sum_{y\in F}\exp(\sup_{z\in B_{n}^{u}(y,\epsilon)}\log g_{n}(z))\mid F\;\text{is an} \;(n, \epsilon, \gamma)\, u\text {-spanning subset of } \eta(x)\right\}\\
\leq& e^{C+n\rho}\inf\left\{\sum_{y\in F}\exp(\frac{1}{l}\sum^{n-1}_{i=1}\log g_{l}(f^{i}(y)))\mid F\;\text{is an} \;(n, \epsilon, \gamma)\, u\text {-spanning subset of } \eta(x)\right\}.
\end{aligned}
$$
 
 Set $$M(n, \epsilon)=\inf\left\{\sum_{y\in F}\exp(\frac{1}{l}\sum^{n-1}_{i=1}\log g_{l}(f^{i}(y)))\mid F\;\text{is an} \;(n, \epsilon, \gamma)\, u\text {-spanning subset of } \eta(x)\right\}
,$$ then apply Theorem \ref{metric pressure formular} for the potential $\varphi=\dfrac{1}{l}\log g_l$, one has
$$
\begin{aligned}
&\lim_{\epsilon\rightarrow 0}\limsup\limits_{n\rightarrow\infty} \frac{1}{n}\log M(n, \epsilon)=&h^{u}_{\mu}(f)+\int_M\frac{1}{l}\log g_{l}d\mu.
\end{aligned}
$$
Therefore,
$$P_{\mu}^{u}(f, \mathcal{G}, \eta(x), \gamma)\leq h^{u}_{\mu}(f)+\int_M\frac{1}{l}\log g_{l}d\mu+\rho.$$
Let $l\rightarrow \infty$ and by the arbitrariness of $\rho$, one has
$$P_{\mu}^{u}(f, \mathcal{G})\leq  h^{u}_{\mu}(f)+\mathcal{G}_{*}(\mu).$$

Second, we prove the inverse inequality $$P_{\mu}^{u}(f, \mathcal{G})\geq  h^{u}_{\mu}(f)+\mathcal{G}_{*}(\mu).$$

For each $s>0$, there exists $0<\rho\leq s$, a measurable partition $\mathcal{P}\ni\alpha= \{A_{1}, \cdots\cdots, A_{m}\}$, and a finite open cover $\mathcal{U}=\{U_{1}, \cdots\cdots, U_{k}\}$ of $M$ with $k\geq m$, such that the following properties hold (using regularity of the measure $\mu$):
\begin{enumerate}
  \item $\mbox{diam}\,\alpha:= \sup\{\mbox{diam}\,A_{i} \mid A_{i} \in\alpha\}\leq s$ and $\mbox{diam}\,\mathcal{U}:= \sup\{\mbox{diam}\,U_{j} \mid U_{j} \in\mathcal{U}\}\leq s;$
  \item $\overline{U_{i}}\subseteq A_{i},\;1\leq i\leq m$;
  \item $\mu(A_{i}\setminus U_{i})\leq \rho,\;1\leq i\leq m$ and $\mu(\bigcup^{k}\limits_{i=m+1}U_{i})\leq \rho$;
  \item $2\rho\log m\leq s$.
\end{enumerate}

Set $$S_{n}(x):=\mbox{Card}\{0\leq l\leq n-1 \mid f^{l}(x)\in \bigcup^{k}_{i=m+1}U_{i}\},$$
We claim that there exists a $E_{N}\subseteq M$ with $\mu(E_{N})>1-\gamma$ such that  if $n\geq N$, then for any $y\in E_{N}$ one has
                      \begin{enumerate}
                         \item $S_{n}(y)\leq 2\rho n$;
                         \item $\mu^{\eta}_{y}(\alpha^{n-1}_{0}(y))\leq e^{-n(h^u_{\mu}(f|\eta)-\rho)}$;
                         \item $\mathcal{G}_{*}(\mu)-\rho\leq \dfrac{1}{n}\log g_n(y)\leq \mathcal{G}_{*}(\mu)+\rho$.
                       \end{enumerate}

Indeed since $\mu$ is ergodic, 
take $h$ to be the characteristic function on the set $\bigcup^{k}\limits_{i=m+1}U_{i}$, then $S_{n}(x)=\sum^{n-1}\limits_{i=0}h(f^{i}x)$. According to
the Birkhoff's ergodic theorem, one has
$$\lim\limits_{n\to \infty}\frac{1}{n}\sum^{n-1}\limits_{i=0}h(f^{i}y)=\int_M h d\mu=\mu(\bigcup^{k}_{i=m+1}U_i)\leq \rho, \;\mu-a.e.\,y.$$
 By the Sub-additive Ergodic Theorem, one has
$$\lim\limits_{n\to\infty}\dfrac{1}{n}\log g_n(y)= \mathcal{G}_{*}(\mu), \;\mu-a.e.\, y.$$
By Theorem B in \cite{Hu1}, one has 
 $$\lim_{n \to \infty}-\dfrac{1}{n}\log\mu_{y}^{\eta}(\alpha^{n-1}_{0}(y))= h^u_{\mu}(f|\eta), \;\mu-a.e.\,y.$$
 Hence, for $\mu-a.e.\,y$, there exists an $N(y)=N(y, \rho)>0$ such that if $n\geq N(y)$, then
 $$S_{n}(y)\leq 2n\rho, \;\mu^{\eta}_{x}(\alpha^{n-1}_{0}(y))\leq e^{-n(h^u_{\mu}(f|\eta)-\rho)},$$ and $$\mathcal{G}_{*}(\mu)-\rho\leq \dfrac{1}{n}\log g_n(y)\leq \mathcal{G}_{*}(\mu)+\rho.$$

 Set $E_n=\left\{y\in M\mid N(y)=N(y, \rho)\leq n\right\}$, then $\mu(\cup_{n=1}^{\infty}E_n)=1$. So there exists an $N>0$ large enough with $\mu(E_N)>1-\gamma$, such that if $n>N$,  then for any $y\in E_N$, one has
 \begin{enumerate}
 \item $S_{n}(y)\leq 2\rho n$;
 \item $\mu^{\eta}_{y}(\alpha^{n-1}_{0}(y))\leq e^{-n(h^u_{\mu}(f|\eta)-\rho)}$;
 \item $\mathcal{G}_{*}(\mu)-\rho\leq \dfrac{1}{n}\log g_n(y)\leq \mathcal{G}_{*}(\mu)+\rho$.
 \end{enumerate}
 Hence the claim above is verified.

 For the set $E_{N}$, there exists $x\in M$ such that $\mu^{\eta}_{x}(E_N)=\mu^{\eta}_{x}(E_{N}\cap \eta(x))>1-\gamma$. Set $A:=E_{N}\cap \eta(x)$, if $n>N$, then for every $y\in A$, one has \begin{enumerate}
 \item $S^{u}_{n}(y)\leq 2\rho n$;
 \item $\mu^{\eta}_{x}(\alpha^{n-1}_{0}(y))\leq e^{-n(h^u_{\mu}(f|\eta)-\rho)}$;
 \item $\mathcal{G}_{*}(\mu)-\rho\leq \dfrac{1}{n}\log g_n(y)\leq \mathcal{G}_{*}(\mu)+\rho$;
 \end{enumerate}
 where $S^{u}_{n}(y):=\mbox{Card}\{0\leq l\leq n-1 \mid f^{l}(x)\in \bigcup^{k}_{i=m+1}(U_{i}\cap\eta(x))\}.$

   Set $$(\alpha^{n-1}_{0})^{*}:=\{D\in \alpha^{n-1}_{0}\mid D\cap A\neq\emptyset\}.$$
Then for any $n\geq N$, one has

\begin{equation}\label{(1)}
\mbox{Card}((\alpha^{n-1}_{0})^{*})\geq\sum\limits_{D\in(\alpha^{n-1}_{0})^{*}}\mu^{\eta}_{x}(D)e^{n(h^u_{\mu}(f|\eta)-\rho)}\geq \mu^{\eta}_{x}(A)e^{n(h^u_{\mu}(f|\eta)-\rho)}.
\end{equation}
On the other hand, choose $C>1$ satisfies $ d(y, z)\leq d^{u}(y, z)\leq C d(y, z)$ for any $y, z\in\overline{\eta(x)}$. Let $2C\epsilon$ be less than the Lebesgue number of the open cover $\mathcal{U}$. Let $F'$ be an $(n,\epsilon)\,u$-spanning set of $A$. Suppose $F\subseteq F'$ satisfies that for any $y \in F$, $\overline{B^{u}_{n}(y, \epsilon)}\cap A\neq\emptyset$. For each $y\in F$ and $B=\overline{B^{u}_{n}(y, \epsilon)}$, set $$p(B, \alpha^{n-1}_{0})=\mbox{Card}\{C\in \alpha^{n-1}_{0}\mid C\cap B\cap A\neq\emptyset\}.$$
We now estimate the number $p(B, \alpha^{n-1}_{0})$. Note that $\overline{B^{u}(f^{j}y, \epsilon)}\subseteq U^{u}_{i_{l}}=U_{i_{l}}\cap \eta(x)$ for some $U_{i_{l}}\in\mathcal{U}$. If $1\leq i_{l}\leq m$, then $f^{-l}U^{u}_{i_{l}}\subseteq f^{-l}A^{u}_{i_{l}}$, where $A^{u}_{i_{l}}=A_{i_{l}}\cap \eta(x)$. If $m+1\leq i_{l}\leq k$, then there are at most $m$ sets of the form
$f^{-l}A^{u}_{i_{l}}$ which have non-empty intersection with $f^{-l}U^{u}_{i_{l}}$. Since $S^{u}_{n}(y)\leq 2n\rho$, one has $p(B, \alpha^{n-1}_{0})\leq m^{2n\rho}$.
Then it follows that
\begin{align}\label{(2)}
\mbox{Card}((\alpha^{n-1}_{0})^{*})&\leq\sum_{z\in F}p\left(\overline{B^{u}_{n}(y, \epsilon)}, \alpha^{n-1}_{0}\right)\\
&\leq \mbox{Card}(F)m^{2n\rho}= \mbox{Card}(F)e^{2n\rho\log m}.
\end{align}
Hence $$\mbox{Card}(F)\geq \mu^{\eta}_x(A)e^{n(h^u_{\mu}(f|\eta)-\rho)-2n\rho\log m},$$ together with the fact that $\overline{B^{u}_{n}(z, \epsilon)}\cap A\neq\emptyset$ for each
$z\in F$, then
$$\begin{aligned}
&\sum_{z\in F'}\exp\left(\sup_{y\in B^{u}_{n}(z, \epsilon)}\log g_{n}(y)\right)\\
\geq&\sum_{z\in F}\exp\left(\sup_{y\in B^{u}_{n}(z, \epsilon)}\log g_{n}(y)\right)\\
\geq &\mbox{Card}(F)\exp\left(n(\mathcal{G}_{*}(\mu)-\rho)\right)\\
\geq&\mu^{\eta}_{x}(A)\exp\left(n(h^u_{\mu}(f|\eta)+\mathcal{G}_{*}(\mu))-2n\rho-2\rho\log m)\right).
\end{aligned}$$
This
leads to
$$\begin{aligned}
\dfrac{1}{n}P_{\mu}^{u}(f, \mathcal{G}, \epsilon, n, \eta(x), \gamma)\geq\dfrac{1}{n}\mu^{\eta}_{x}(A)+h^u_{\mu}(f|\eta)+\mathcal{G}_{*}(\mu)-2\rho-\dfrac{1}{n}2\rho\log m,
\end{aligned}$$
Let $n\to\infty$, since $s$ is arbitrary, $\rho\leq s$, and $2\rho\log m\leq s$, one has $$P_{\mu}^{u}(f, \mathcal{G})\geq h^u_{\mu}(f)+\mathcal{G}_{*}(\mu).$$

\end{proof}
\begin{rem} From the proof above, one can see that for any $\mu\in\mathcal{M}^e_f$ the quantity $P_{\mu}^{u}\left(f, \mathcal{G},\eta(x), \gamma \right)$ in Definition \ref{metric-pressure} actually doesn't depend on $\gamma$ and $\eta\in \mathcal{P}^u$ for $\mu-$a.e.\,$x$.
\end{rem}

\section{Other Definitions of unstable measure theoretic pressure.}

In this section, we investigate other definitions of unstable pressure, in terms of the Bowen's picture and the capacity picture.

Let $\mathcal{G}=\{\log g_n\}_n$ be a sequence of sub-additive potentials of $f$ on $M$.
Let $Z\subseteq M$ be an arbitrary subset, and $Z$ needn't to be compact or $f$-invariant. Take $\eta\in \mathcal{P}^u$. Take the $(n, \epsilon)$ $u$-Bowen ball around $x$:$$B^u_n(x, \epsilon)=\{y\in W^u(x) \mid d_n^u(x, y)\leq \epsilon\}.$$ For each open cover $\Gamma=\{B_{n_i}^u(x_i, \epsilon)\}_{i\in I}$ of $\Z$, set $n(\Gamma)=\min\{n_i \mid i\in I\}$.

\begin{defi}\label{Bowen-Top-Pressure}
For $s\in \mathbb{R}$, $\delta>0$, $N\in \mathbb{N}$, $\epsilon>0$, $x\in M$, and $Z\subseteq M$, set $$M^u(\mathcal{G}, s, N, \epsilon, Z, \overline{W^u(x, \delta)}):=\inf\limits_{\Gamma}\{\sum\limits_{i}{\exp(-sn_i+\sup\limits_{y\in B^u_{n_i}(x_i, \epsilon)}\log g_{n_i}(y))}\},$$ where $\Gamma$ runs over all countable open covers $\Gamma=\{B_{n_i}^u(x_i, \epsilon)\}_{i\in I}$ of $Z\cap \overline{W^u(x, \delta)}$ with $n(\Gamma)\geq N$.

Let $$m^u(\mathcal{G}, s, \epsilon, Z, \overline{W^u(x, \delta)}):=\lim\limits_{N\to\infty}M^u(\mathcal{G}, s, N, \epsilon, Z, \overline{W^u(x, \delta)}),$$
$$\begin{aligned}
P^u_{B}(f, \mathcal{G}, \epsilon, Z, \overline{W^u(x, \delta)})&:=\inf\{s\mid m^u(\mathcal{G}, s, \epsilon, Z, \overline{W^u(x, \delta)})=0\},\\
&:=\sup\{s\mid m^u(\mathcal{G}, s, \epsilon, Z, \overline{W^u(x, \delta)})=\infty\},\\
\end{aligned}$$ and
$$P^u_{B}(f, \mathcal{G}, Z, \overline{W^u(x, \delta)}):=\liminf \limits_{\epsilon\to 0}P^u_B(f, \mathcal{G}, \epsilon, Z, \overline{W^u(x, \delta)}),$$ then define $$P^u_B(f, \mathcal{G}, Z):=\lim\limits_{\delta\to 0}\sup\limits_{x\in M}P^u_B(f, \mathcal{G}, Z, \overline{W^u(x, \delta)}).$$
We call $P^u_B(f, \mathcal{G}, Z)$ the Bowen unstable topological pressure of $f$ on the subset $Z$ w.\ r.\ t. $\mathcal{G}$.
\end{defi}

\begin{rem}
1.\,As a matter of fact, in Definition \ref{Bowen-Top-Pressure}, we don't have to take the limit with respect to $\delta\to 0$. This can be seen by a simple modification of the proof of Proposition 3.1 in \cite{Zhang}. 

2.\,With the replacement of $\overline{W^u(x, \delta)}$ by $\overline{\eta(x)}$, all the quantities above make sense and then we can define the following metric pressure. This replacement also applies to the next two definitions.
\end{rem}

\begin{defi}\label{Bowen-Metric-Pressure}
For $\mu\in \mathcal{M}_f(M)$, $x\in M$, and the conditional measure $\mu^{\eta}_x$ (recall that $\mu=\int \mu^{\eta}_xd\mu(x)$), we define 
$$P^u_{B, \,\mu}(f, \mathcal{G}, \eta(x)):=\inf\{P^u_B(f, \mathcal{G}, Z, \overline{\eta(x)})\mid \mu^{\eta}_x(Z)=1\},$$
$$P^u_{B, \,\mu}(f, \mathcal{G}):=\sup\limits_{\eta\in P^u}\int_{M}P^u_{B,\, \mu}(f, \mathcal{G}, \eta(x))d\mu(x),$$ which is called the Bowen unstable metric pressure of $f$ w.\ r.\ t. $\mathcal{G}$. 

\end{defi}

\begin{defi}\label{Capacity-Top-Pressure}

Set $$\Lambda^u(\mathcal{G}, n, \epsilon, Z, \overline{W^u(x, \delta)}):=\inf\limits_{\Gamma}\{\sum\limits_{i}\sup\limits_{y\in B^u_{n_i}(x_i, \epsilon)}g_n(y)\},$$ where $\Gamma$ runs over all open covers $\Gamma=\{B_{n_i}^u(x_i, \epsilon)\}_{i\in I}$ of $\Z$ with $n_i=n$ for all $i$.

Then define $$\underline{CP}^u(f, \mathcal{G}, \epsilon, Z, \overline{W^u(x, \delta)}):=\liminf\limits_{n\to\infty}\frac{1}{n}\log \Lambda^u(\mathcal{G}, n, \epsilon, Z, \overline{W^u(x, \delta)}),$$
$$\overline{CP}^u(f, \mathcal{G}, \epsilon, Z, \overline{W^u(x, \delta)}):=\limsup\limits_{n\to\infty}\frac{1}{n}\log \Lambda^u(\mathcal{G}, n, \epsilon, Z, \overline{W^u(x, \delta)}),$$
$$\underline{CP}^u(f, \mathcal{G}, Z, \overline{W^u(x, \delta)}):=\liminf\limits_{\epsilon\to0}\underline{CP}^u(f, \mathcal{G}, \epsilon, Z, \overline{W^u(x, \delta)}),$$
$$\overline{CP}^u(f, \mathcal{G}, Z, \overline{W^u(x, \delta)}):=\liminf\limits_{\epsilon\to0}\overline{CP}^u(f, \mathcal{G}, \epsilon, Z, \overline{W^u(x, \delta)}).$$ Then the lower and upper capacity unstable topological pressure of $f$ on $Z$ w.\ r.\ t. $\mathcal{G}$ are defined by $$\underline{CP}^u(f, \mathcal{G}, Z):=\lim\limits_{\delta\to 0}\sup\limits_{x\in M}\underline{CP}^u(f, \mathcal{G}, Z, \overline{W^u(x, \delta)}),$$ and $$\overline{CP}^u(f, \mathcal{G}, Z):=\lim\limits_{\delta\to0}\sup\limits_{x\in M}\overline{CP}^u(f, \mathcal{G}, Z, \overline{W^u(x, \delta)}).$$

\end{defi}

\begin{defi}\label{Capacity-Metric-Pressure}

For $\mu\in \mathcal{M}_f(M)$, $x\in M$, and the conditional measure $\mu^{\eta}_x$ (recall that $\mu=\int \mu^{\eta}_xd\mu(x)$), we define
$$\underline{CP}^u_{\mu}(f, \mathcal{G}, \eta(x)):=\lim\limits_{\gamma\to0}\inf\{\underline{CP}^u(f, \mathcal{G}, Z, \overline{\eta(x)})\mid \mu^{\eta}_{x}(Z)\geq1-\gamma\},$$ and 
$$\underline{CP}^u_{\mu}(f, \mathcal{G}):=\sup\limits_{\eta\in P^u}\int_M\underline{CP}^u_{\mu}(f, \mathcal{G}, \eta(x))d\mu(x).$$ This is called the lower capacity metric pressure of $f$ w.\ r.\ t. $\mathcal{G}$, and similarly the upper capacity metric pressure can be defined.


\end{defi}

Next we collect some basic properties of these pressures.
\begin{prop}\label{basic property}
For pressures defined above, the following properties hold.

i) $P(f, \mathcal{G}, Z_1)\leq P(f, \mathcal{G}, Z_2)$ if $Z_1\subseteq Z_2$, where $P$ can be chosen to be $P^u_B$, $\underline{CP}^u$, or $\overline{CP}^u$.

ii) $P(f, \mathcal{G}, \bigcup\limits_{i}Z_i)=\sup\limits_{i}P(f, \mathcal{G}, Z_i)$ for a family $\{Z_i\}_i$ of subsets of $M$, where $P$ can be chosen to be $P^u_B$, $\underline{CP}^u$, or $\overline{CP}^u$.


iii) $P^u_B(f, \mathcal{G}, Z)\leq \underline{CP}^u(f, \mathcal{G}, Z)\leq \overline{CP}^u(f, \mathcal{G}, Z)$ for any subset $Z\subseteq M$.

iv) For any $\mu\in \mathcal{M}_{f}(M)$, one has $$P^u_{B, \mu}(f, \mathcal{G})\leq \underline{CP}^u_{\mu}(f, \mathcal{G})\leq \overline{CP}^u_{\mu}(f, \mathcal{G}).$$
\end{prop}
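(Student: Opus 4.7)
The plan is to handle the four items separately: (i) and (ii) are essentially formal manipulations of the defining infima, while (iii) and (iv) carry the real content.

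For (i), if $Z_1\subseteq Z_2$ then every countable Bowen-$u$-ball cover of $Z_2\cap\overline{W^u(x,\delta)}$ also covers $Z_1\cap\overline{W^u(x,\delta)}$, so the infima defining $M^u(\mathcal{G},s,N,\epsilon,Z,\overline{W^u(x,\delta)})$ and $\Lambda^u(\mathcal{G},n,\epsilon,Z,\overline{W^u(x,\delta)})$ are monotone non-decreasing in the set variable $Z$; this monotonicity is preserved by all subsequent limits in $N$, $\epsilon$, $\delta$ and by the outer $\sup_{x\in M}$. For (ii), the inequality $\geq$ is immediate from (i). For the reverse I would combine near-optimal covers of each $Z_i$ to show $m^u(\mathcal{G},s,\epsilon,\bigcup_i Z_i,\overline{W^u(x,\delta)})\leq \sum_i m^u(\mathcal{G},s,\epsilon,Z_i,\overline{W^u(x,\delta)})$; for any $s>\sup_i P^u_B(f,\mathcal{G},\epsilon,Z_i,\overline{W^u(x,\delta)})$ each summand vanishes, so a threshold argument on $s$ yields $P^u_B(f,\mathcal{G},\bigcup_i Z_i)\leq \sup_i P^u_B(f,\mathcal{G},Z_i)$. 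The capacity case is similar, using the analogous (finitely) subadditive behaviour of $\Lambda^u$ in the set variable followed by $\liminf_n$ or $\limsup_n$.

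For (iii), the inequality $\underline{CP}^u\leq \overline{CP}^u$ is immediate from $\liminf\leq\limsup$. For $P^u_B\leq \underline{CP}^u$, I fix $t>\underline{CP}^u(f,\mathcal{G},\epsilon,Z,\overline{W^u(x,\delta)})$ and pick a subsequence $n_k\to\infty$ with $\Lambda^u(\mathcal{G},n_k,\epsilon,Z,\overline{W^u(x,\delta)})<e^{tn_k}$. A minimizing cover at time $n_k$ is admissible for $M^u(\mathcal{G},s,N,\epsilon,Z,\overline{W^u(x,\delta)})$ as soon as $n_k\geq N$, giving $M^u\leq e^{-sn_k}\Lambda^u<e^{(t-s)n_k}$. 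For any $s>t$ the right-hand side tends to $0$ in $k$, forcing $m^u(\mathcal{G},s,\epsilon,Z,\overline{W^u(x,\delta)})=0$ and hence $P^u_B(f,\mathcal{G},\epsilon,Z,\overline{W^u(x,\delta)})\leq s$; letting $s\downarrow t$, $t\downarrow \underline{CP}^u$, $\epsilon\to 0$, $\delta\to 0$ and taking $\sup_{x\in M}$ concludes.

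The hard part will be (iv), specifically the inequality $P^u_{B,\mu}\leq \underline{CP}^u_\mu$; the other inequality $\underline{CP}^u_\mu\leq \overline{CP}^u_\mu$ is inherited pointwise from (iii) through the defining infima, integral and supremum. The crux is to establish the identification
\[
P^u_{B,\mu}(f,\mathcal{G},\eta(x))=\lim_{\gamma\to 0}\inf\bigl\{P^u_B(f,\mathcal{G},Z,\overline{\eta(x)})\bigm| \mu^{\eta}_{x}(Z)\geq 1-\gamma\bigr\},
\]
after which (iii) applied at each $Z$, followed by integration in $x$ and $\sup_{\eta\in\mathcal{P}^u}$, delivers the desired inequality. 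The direction $\geq$ is immediate from $\{Z:\mu^{\eta}_{x}(Z)=1\}\subseteq \{Z:\mu^{\eta}_{x}(Z)\geq 1-\gamma\}$. For $\leq$, I would choose for each $n$ a set $Z_n$ with $\mu^{\eta}_{x}(Z_n)\geq 1-1/n$ whose $P^u_B$-value exceeds the level-$1/n$ infimum by at most $1/n$, and set $Z_*=\limsup_n Z_n$; reverse Fatou gives $\mu^{\eta}_{x}(Z_*)=1$, while (i) combined with the countable-union property from (ii) applied to $P^u_B$ yields $P^u_B(f,\mathcal{G},Z_*,\overline{\eta(x)})\leq \inf_k\sup_{n\geq k}P^u_B(f,\mathcal{G},Z_n,\overline{\eta(x)})$, which is bounded above by the right-hand side of the display. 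The subtle point, and the main obstacle, is that this step invokes (ii) in a genuinely countable way, a property available for Bowen pressure but only finite for capacity; fortunately only the Bowen case enters here.
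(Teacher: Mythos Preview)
Your argument is correct and, in fact, more careful than the paper's. The paper disposes of (i) and (ii) by saying they ``follow from the definition,'' refers to Barreira's Theorem~1.4(a) for (iii), and asserts that (iv) ``follows immediately from iii).'' Your treatment of (i)--(iii) is the standard expansion of these references and is fine.

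Where your write-up genuinely adds content is in (iv). The paper's claim that (iv) is immediate from (iii) is too quick: the Bowen metric pressure is an infimum over sets of \emph{full} conditional measure, while the capacity metric pressure is a $\gamma\to 0$ limit of infima over sets of conditional measure at least $1-\gamma$. Applying (iii) termwise only yields $\inf_{\mu^\eta_x(Z)=1}P^u_B(Z)\leq \inf_{\mu^\eta_x(Z)=1}\underline{CP}^u(Z)$, and the latter is a priori \emph{larger} than $\underline{CP}^u_\mu(f,\mathcal{G},\eta(x))$, not smaller. Your identification
\[
P^u_{B,\mu}(f,\mathcal{G},\eta(x))=\lim_{\gamma\to 0}\inf\bigl\{P^u_B(f,\mathcal{G},Z,\overline{\eta(x)})\bigm|\mu^\eta_x(Z)\geq 1-\gamma\bigr\}
\]
via the $\limsup Z_n$ construction, reverse Fatou, and the countable stability (ii) of $P^u_B$ is exactly what is needed to close this gap, and it is correct. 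You are also right to flag that this step uses countable stability in an essential way, which is available for $P^u_B$ but not for the capacity pressures; fortunately only the Bowen case is needed. One minor point: the properties (i) and (ii) you invoke here must hold at the level of $P^u_B(f,\mathcal{G},\cdot,\overline{\eta(x)})$ rather than after the outer $\sup_x$ and $\delta$-limit; your arguments for (i) and (ii) already establish this intermediate version, so there is no problem.
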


\begin{proof}

i), ii) follow from the definition. iii) can be proved by a quite similar argument as the proof of Theorem 1.4 (a) in \cite{Bar}. iv) follows immediately from iii).

\end{proof}
To prove Theorem \ref{Main Result}, we prove the following two lemmas and our proof are influenced by arguments in \cite{Hu}.

\begin{lem}\label{Capacity-Inequality}
 For any $\mu\in \mathcal{M}^e_{f}(M)$, one has $$\overline{CP}^u_{\mu}(f, \mathcal{G})\leq h^u_{\mu}(f)+\mathcal{G}_{*}(\mu).$$

\end{lem}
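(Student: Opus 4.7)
The plan is to mirror the upper-bound argument of Lemma~\ref{metric pressure formular 1}, strengthened to produce a single $n$-independent ``good'' set $Z$ on the fiber $\eta(x)$, then pass from sub-additive to additive potentials by Lemma~\ref{lemma 2.10}.

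Fix $\eta\in\mathcal{P}^u$ and a base point $x$ at which the ergodic limits below hold on $\eta(x)$. For parameters $\rho>0$, $l\in\mathbb{N}$ and $\gamma\in(0,1)$, select a partition $\alpha\in\mathcal{P}$ with $\mu(\partial\alpha)=0$ and diameter smaller than $\epsilon/(2C)$, so that any atom of $\alpha_0^{n-1}$ intersected with $\eta(x)$ is contained in the $(n,\epsilon)$ $u$-Bowen ball around each of its points. Using Theorem~B of \cite{Hu1} for $\alpha$, the Birkhoff ergodic theorem applied to the continuous potential $\frac{1}{l}\log g_l$, and an Egoroff upgrade on the conditional measure $\mu_x^\eta$, I would produce a set $Z=Z(\rho,l,\gamma)\subseteq\eta(x)$ with $\mu_x^\eta(Z)\geq 1-\gamma$ and a threshold $N\in\mathbb{N}$ such that, for every $y\in Z$ and every $n\geq N$,
\begin{equation*}
\mu_x^\eta(\alpha_0^{n-1}(y))\geq e^{-n(h_\mu^u(f)+\rho)} \quad\text{and}\quad \tfrac{1}{n}\sum_{i=0}^{n-1}\tfrac{1}{l}\log g_l(f^iy)\leq \tfrac{1}{l}\int\log g_l\,d\mu+\rho.
\end{equation*}

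The first bound forces $Z$ to meet at most $e^{n(h_\mu^u(f)+\rho)}$ atoms of $\alpha_0^{n-1}$. Picking a representative $y_j\in Z$ in each such atom, the family $\{B_n^u(y_j,\epsilon)\}_j$ is an $(n,\epsilon)$ $u$-Bowen cover of $Z$ by the diameter choice of $\alpha$. Applying Lemma~\ref{lemma 2.10} with the chosen $l$ and $\rho$ (after shrinking $\epsilon$ if necessary) yields a constant $C'$ independent of $n$ and $\epsilon$ with
\begin{equation*}
\sup_{z\in B_n^u(y_j,\epsilon)}\log g_n(z)\leq \sum_{i=0}^{n-1}\tfrac{1}{l}\log g_l(f^iy_j)+n\rho+C'.
\end{equation*}
Combined with the Birkhoff bound at $y_j\in Z$, the definition of $\Lambda^u$ then gives
\begin{equation*}
\Lambda^u(\mathcal{G},n,\epsilon,Z,\overline{\eta(x)}) \leq \exp\!\bigl(n(h_\mu^u(f)+\tfrac{1}{l}\textstyle\int\log g_l\,d\mu+3\rho)+C'\bigr).
\end{equation*}
Dividing by $n$ and passing to $\limsup_{n\to\infty}$, then $\liminf_{\epsilon\to 0}$, then $\lim_{\gamma\to 0}$ (the right-hand side is $\gamma$-independent), then $\rho\to 0$, and finally $l\to\infty$ so that $\tfrac{1}{l}\int\log g_l\,d\mu\to\mathcal{G}_*(\mu)$, produces $\overline{CP}^u_\mu(f,\mathcal{G},\eta(x))\leq h_\mu^u(f)+\mathcal{G}_*(\mu)$ for $\mu$-a.e.\ $x$. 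Integration and the supremum over $\eta\in\mathcal{P}^u$ close the lemma.

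The main technical obstacle, compared with Lemma~\ref{metric pressure formular 1}, is making $Z$ independent of $n$: the capacity definition requires $\limsup_n n^{-1}\log\Lambda^u(n)$ for a single fixed $Z$, not a sequence $Z_n$, so the good-set construction must run on the conditional measure $\mu_x^\eta$ via Egoroff simultaneously for both the Shannon--McMillan--Breiman limit and the Birkhoff limit of $\tfrac{1}{l}\log g_l$, rather than exploiting an $n$-dependent good set as in the spanning-set argument.
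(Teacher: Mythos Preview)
Your approach is sound and reaches the right conclusion, but it differs from the paper's and leaves one quantifier-ordering detail to patch. The paper does not go through a partition $\alpha$ and Theorem~B of \cite{Hu1}; instead it invokes the Brin--Katok-type Lemma~3.2 of \cite{Hu1} directly, namely $\lim_n -\tfrac{1}{n}\log\mu_y^\eta(B_n^u(y,\epsilon/2))=h_\mu^u(f|\eta)$ for each fixed $\epsilon$. From this it builds a good set $K_N(\rho,\epsilon)$, takes a maximal $(n,\epsilon)$ $u$-separated subset $E$ of $K_N\cap\overline{\eta(x)}$, and bounds $|E|\leq e^{n(h_\mu^u(f|\eta)+\rho)}$ via disjointness of the half-balls $B_n^u(y,\epsilon/2)$ together with the measure lower bound; Lemma~\ref{lemma 2.10} and the Birkhoff average of $\tfrac{1}{k}\log g_k$ then control the potential exactly as in your argument. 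Working with Bowen balls throughout is slightly more economical since $\Lambda^u$ is itself defined via Bowen-ball covers, so no atom-to-ball comparison is needed. Your partition route is equally valid and has the advantage of reusing the machinery of Lemma~\ref{metric pressure formular 1} almost verbatim.

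On the ordering issue: you correctly flag that $Z$ must be $n$-independent, but in your construction $\alpha$ has diameter $<\epsilon/(2C)$, so $\alpha$---and hence the Egoroff set built from Theorem~B applied to $\alpha$---also depends on $\epsilon$, contrary to your notation $Z=Z(\rho,l,\gamma)$. Since the capacity definition fixes $Z$ \emph{before} taking $\liminf_{\epsilon\to 0}$, and $\Lambda^u$ does not decrease as $\epsilon\to 0$, this has to be addressed. The routine repair: pick a sequence $\alpha_j\in\mathcal{P}$ with $\mu(\partial\alpha_j)=0$ and $\mathrm{diam}\,\alpha_j\to 0$, run Egoroff on $\mu_x^\eta$ simultaneously for all the SMB limits (error $\gamma\,2^{-j}$) together with the Birkhoff sum of $\tfrac{1}{l}\log g_l$, and let $Z$ be the intersection. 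For this single $Z$ your atom-counting bound holds at every scale $\epsilon_j=2C\,\mathrm{diam}\,\alpha_j\to 0$, which suffices for the $\liminf$. (The paper's proof glosses over the analogous point, since its $K_N$ also depends on $\epsilon$; the same countable-scale fix applies there.)
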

\begin{proof}
For any positive integer $k$, any $\epsilon>0$, and any small number $\rho>0$, take $\eta\in \mathcal{P}^u$, by Lemma 3.2 in \cite{Hu1} and the Birkhoff's ergodic theorem, one has
$$\lim\limits_{n\to \infty}-\frac{1}{n}\log \mu^{\eta}_{y}(B_n^u(y, \epsilon/2))=h^u_{\mu}(f|\eta)$$ for $\mu$-a.e.\,$y$, and $$\lim\limits_{n\to \infty}\frac{1}{n}\sum\limits_{i=0}^{n-1}\frac{1}{k}\log g_k(f^iy)=\int\frac{1}{k}\log g_kd\mu$$ for $\mu$-a.e.\,$y$.

Hence for $\mu$-a.e.\,$y$, there exists an $N(y, \rho, \epsilon)>0$ such that if $n\geq N(y, \rho, \epsilon)$, then $$\mu^{\eta}_{y}(B_n^u(y, \epsilon/2))\geq e^{-n(h^u_{\mu}(f|\eta)+\rho)},$$ and $$\left|\frac{1}{n}\sum\limits_{i=0}^{n-1}\frac{1}{k}\log g_k(f^iy)-\int\frac{1}{k}\log g_kd\mu\right|\leq \rho.$$ Set $K_n(\rho, \epsilon)=\{y\in M\mid N(y, \rho, \epsilon)\leq n\}$. Then $K_n(\rho, \epsilon)\subseteq K_{n+1}(\rho, \epsilon)$, and $\mu(\bigcup\limits_{n=1}^{\infty}K_n(\rho, \epsilon))=1$. So there exists an $N>0$ such that $\mu(K_N(\rho, \epsilon))>1-\rho$. Furthermore, for each $x\in K_N(\rho, \epsilon)$, let $G(x)= \overline{\eta(x)}\cap K_N(\rho, \epsilon)$, then $\mu^{\eta}_{x}(G(x))\geq 1-\rho$, and for each $y\in G(x)$, one has
 \begin{equation}\label{measure-inequality}
 \mu^{\eta}_{y}(B_n^u(y, \epsilon/2))=\mu^{\eta}_{x}(B_n^u(y, \epsilon/2))\geq e^{-n(h^u_{\mu}(f|\eta)+\rho)} \,(\text{since}\, \mu^{\eta}_{y}=\mu^{\eta}_{x}).
 \end{equation}

By Lemma \ref{lemma 2.10}, one has $$\sup\limits_{z\in B_n^u(y, \epsilon)}\log g_n(z)\leq n\int\frac{1}{k}\log g_kd\mu+2n\rho+C.$$  Let $E$ be an $(n, \epsilon)$ $u$-separated set of $K_{N}(\rho, \epsilon)$ with the largest cardinality. Then $$\overline{\eta(x)}\cap K_N(\rho, \epsilon)\subseteq \bigcup\limits_{y\in E}B_n^u(y, \epsilon).$$ Furthermore, the $u$-balls $\{B_n^u(y, \epsilon/2)\mid y\in E\}$ are mutually disjoint, and by (\ref{measure-inequality}), the cardinality of $E$ is less than or equal to $e^{n(h^u_{\mu}(f|\eta)+\rho)}$.

Therefore, $$\begin{aligned}
\Lambda^u(\mathcal{G}, n, \epsilon, K_N(\rho, \epsilon), \overline{\eta(x)})&\leq \sum\limits_{y\in E}\sup\limits_{z\in B_n^u(y, \epsilon)}g_n(z)\\
&\leq e^{n(h^u_{\mu}(f|\eta)+\rho)}e^{n(\int\frac{1}{k}\log g_kd\mu+2\rho)+C}
\end{aligned}$$ Hence $$\overline{CP}^u(f, \mathcal{G}, K_N(\rho, \epsilon), \overline{\eta(x)})\leq h^u_{\mu}(f|\eta)+\int\frac{1}{k}\log g_kd\mu+3\rho,$$ and so $$\overline{CP}^u(f, \mathcal{G}, \eta(x))\leq h^u_{\mu}(f|\eta)+\int\frac{1}{k}\log g_kd\mu+3\rho.$$ Let $k\to \infty$, by the arbitrariness of $\rho$ and Theorem A in \cite{Hu1}, one gets that $$\overline{CP}^u_{\mu}(f, \mathcal{G}, \eta(x))\leq h^u_{\mu}(f)+\mathcal{G}_{*}(\mu).$$ Therefore, $$\overline{CP}^u_{\mu}(f, \mathcal{G})\leq h^u_{\mu}(f)+\mathcal{G}_{*}(\mu).$$

\end{proof}

\begin{lem}\label{Bowen-Inequality}
 For any $\mu\in \mathcal{M}^e_{f}(M)$, one has $$P^u_{B, \mu}(f, \mathcal{G})\geq h^u_{\mu}(f)+\mathcal{G}_{*}(\mu).$$

\end{lem}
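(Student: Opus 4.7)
The plan is to mimic the usual Frostman/distribution lower bound for topological pressure: pick a full-measure subset $Z$, use a Brin--Katok--type bound on conditional measures of $u$-Bowen balls together with the Sub-additive Ergodic Theorem to replace both the ``entropy term'' and the ``Lyapunov term'' by pointwise estimates on a set of near-full measure, and then deduce from the covering condition an estimate that forces the Carathéodory sum to stay bounded away from $0$.

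First, fix $\eta\in\mathcal{P}^u$, a measurable set $Z$ with $\mu^\eta_x(Z)=1$ for $\mu$-a.e.\,$x$, and small $\rho>0$ and $\epsilon>0$. Applying Lemma 3.2 of \cite{Hu1} (the unstable Brin--Katok formula) and the Sub-additive Ergodic Theorem, I would produce, for each sufficiently small $\epsilon>0$, a measurable set $K_N=K_N(\rho,\epsilon)$ with $\mu(K_N)>1-\rho$ such that for every $y\in K_N$ and every $n\ge N$,
\begin{align*}
\mu^{\eta}_{y}\bigl(B^u_n(y,2\epsilon)\bigr)&\le e^{-n(h^u_{\mu}(f|\eta)-\rho)},\\
\tfrac{1}{n}\log g_n(y)&\ge \mathcal{G}_{*}(\mu)-\rho.
\end{align*}
Fix $x$ for which $\mu^\eta_x(Z)=1$ and $\mu^\eta_x(K_N\cap\eta(x))>1-\rho$, and set $A:=Z\cap K_N\cap\eta(x)$, so $\mu^\eta_x(A)>1-\rho$.

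Next, take any countable cover $\Gamma=\{B^u_{n_i}(x_i,\epsilon)\}$ of $Z\cap\overline{\eta(x)}$ with $n(\Gamma)\ge N$. Restrict to the subfamily of balls meeting $A$; for each such $i$, choose $y_i\in B^u_{n_i}(x_i,\epsilon)\cap A$, which gives $B^u_{n_i}(x_i,\epsilon)\subseteq B^u_{n_i}(y_i,2\epsilon)$. Since the chosen subfamily still covers $A$ (of $\mu^\eta_x$-measure at least $1-\rho$), the Brin--Katok estimate yields
\begin{equation*}
1-\rho\le \mu^{\eta}_{x}(A)\le \sum_{i:\, B\cap A\neq\emptyset}\mu^{\eta}_{x}\bigl(B^u_{n_i}(y_i,2\epsilon)\bigr)\le \sum_{i:\, B\cap A\neq\emptyset}e^{-n_i(h^u_{\mu}(f|\eta)-\rho)}.
\end{equation*}
On the other hand, $y_i\in B^u_{n_i}(x_i,\epsilon)\cap A$ implies $\sup_{z\in B^u_{n_i}(x_i,\epsilon)}\log g_{n_i}(z)\ge \log g_{n_i}(y_i)\ge n_i(\mathcal{G}_{*}(\mu)-\rho)$. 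Therefore, whenever $s<h^u_{\mu}(f|\eta)+\mathcal{G}_{*}(\mu)-2\rho$, the term-by-term inequality $e^{-sn_i+n_i(\mathcal{G}_{*}(\mu)-\rho)}\ge e^{-n_i(h^u_{\mu}(f|\eta)-\rho)}$ gives
\begin{equation*}
\sum_{i}\exp\Bigl(-sn_i+\sup_{z\in B^u_{n_i}(x_i,\epsilon)}\log g_{n_i}(z)\Bigr)\ge 1-\rho.
\end{equation*}
Taking the infimum over $\Gamma$ and then $N\to\infty$ shows $m^u(\mathcal{G},s,\epsilon,Z,\overline{\eta(x)})\ge 1-\rho>0$, so $P^u_B(f,\mathcal{G},\epsilon,Z,\overline{\eta(x)})\ge s$.

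Finally, I would let $s\uparrow h^u_{\mu}(f|\eta)+\mathcal{G}_{*}(\mu)-2\rho$, $\epsilon\to 0$, $\rho\to 0$, then take the infimum over all $Z$ with $\mu^\eta_x(Z)=1$, integrate in $x$, and take the supremum over $\eta\in\mathcal{P}^u$. Combined with Theorem A of \cite{Hu1} (which states $\sup_{\eta}h^u_{\mu}(f|\eta)=h^u_{\mu}(f)$), this delivers $P^u_{B,\mu}(f,\mathcal{G})\ge h^u_{\mu}(f)+\mathcal{G}_{*}(\mu)$. The delicate step is the centre-shifting argument from $x_i$ to $y_i\in A$, which forces the passage from $\epsilon$ to $2\epsilon$ in the Brin--Katok bound; this is harmless only because the formula is uniform in $\epsilon$ as $\epsilon\to 0$, and I would invoke Lemma 3.2 of \cite{Hu1} in exactly that form.
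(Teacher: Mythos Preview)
Your proof is correct and follows essentially the same Frostman-type argument as the paper: both invoke Lemma~3.2 of \cite{Hu1} together with the Sub-additive Ergodic Theorem to obtain pointwise bounds on a set of near-full conditional measure, then perform the same center-shifting trick (you go from radius $\epsilon$ to $2\epsilon$, the paper from $\epsilon/2$ to $\epsilon$) to compare the Carath\'eodory sum with the total conditional measure of the good set. The only cosmetic difference is that the paper fixes the threshold $\lambda=h^u_\mu(f)+\mathcal{G}_*(\mu)-2\rho$ at the outset and covers the smaller set $G_N(x)=K_N\cap Z\cap\overline{\eta(x)}$ (then uses monotonicity in $Z$), whereas you cover $Z\cap\overline{\eta(x)}$ directly and restrict to balls meeting $A$; both reach the same bound $\ge 1-\rho$ (respectively $>1/2$) on the Carath\'eodory sum.
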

\begin{proof}
For any $\rho>0$ and $\gamma\in(0, 1/2)$ and set $\lambda=h^u_{\mu}(f)+\mathcal{G}_{*}(\mu)-2\rho$. Take $\eta\in \mathcal{P}^u$, then for any $\epsilon>0$, by Lemma 3.2 in \cite{Hu1} and the sub-additive ergodic theorem, one has
$$\lim\limits_{n\to \infty}-\frac{1}{n}\log \mu^{\eta}_{y}(B_n^u(y, \epsilon))=h^u_{\mu}(f|\eta),\;\text{and}\;
\lim\limits_{n\to \infty}\frac{1}{n}\log g_n(y)=\mathcal{G}_{*}(\mu)$$ for $\mu$-a.e.\,$y$.

Hence for $\mu$-a.e.\,$y$, there exists an $N(y, \rho, \epsilon)>0$ such that if $n\geq N(y, \rho, \epsilon)$, then $$\mu^{\eta}_{y}(B_n^u(y, \epsilon))\leq e^{-n(h^u_{\mu}(f|\eta)-\rho)}\;\text{and}\; \frac{1}{n}\log g_n(y)\geq \mathcal{G}_{*}(\mu)-\rho.$$
Set $$K_n(\rho, \epsilon)=\{y\in M\mid N(y, \rho, \epsilon)\leq n\}.$$ Then $K_n(\rho, \epsilon)\subseteq K_{n+1}(\rho, \epsilon)$, and $\mu(\bigcup\limits_{n=1}^{\infty}K_n(\rho, \epsilon))=1$, then $$\mu^{\eta}_x(\overline{\eta(x)}\cap \bigcup\limits_{n=1}^{\infty}K_n(\rho, \epsilon))=1.$$ For any $Z\subseteq M$ with $\mu^{\eta}_x(Z)=1$, set $K'=Z\cap\overline{\eta(x)}\cap\bigcup\limits_{n=1}^\infty K_n(\rho, \epsilon)$, Then $\mu^{\eta}_x(K')=1$. So there exists an $N>0$ such that $\mu^{\eta}_x(K_N(\rho, \epsilon)\cap \overline{\eta(x)}\cap Z)>1-\gamma$. Set $G_N(x)= \overline{\eta(x)}\cap K_N(\rho, \epsilon)\cap Z$, then $\mu^{\eta}_{x}(G_N(x))\geq 1-\gamma$, and for each $y\in G_N(x)$, one has
\begin{equation}\label{measure-inequality 1}
\mu^{\eta}_{x}(B_n^u(y, \epsilon))\leq e^{-n(h^u_{\mu}(f|\eta)-\rho)} \,(\text{since}\, \mu^{\eta}_{y}=\mu^{\eta}_{x}).
\end{equation}

Take any countable open cover $\Gamma=\{B_{n_i}^u(y_i, \epsilon/2)\}_i$ of $G_N(x)$ with $n(\Gamma)\geq N$. We can assume $G_N(x)$ is compact, otherwise approximate it by a compact subset within an error. Then we may assume this cover is finite, say $\{B^u_{n_1}(y_1, \epsilon/2), ..., B^u_{n_l}(y_l, \epsilon/2)\}$. For each $i=1,..., l$, we can choose $z_i\in G_N(x)\cap B_{n_i}^u(y_i, \epsilon/2)$, then $B_{n_i}^u(y_i, \epsilon/2)\subseteq B_{n_i}^u(z_i, \epsilon)$, and $\{B_{n_i}^u(z_i, \epsilon)\}_i$ forms an open cover of $G_N(x)$.
Then
$$\begin{aligned}
&\sum\limits_{B_{n_i}^u(z_i, \epsilon)\in \Gamma}\exp(-n_i\lambda+\sup\limits_{y\in B_{n_i}^u(z_i, \epsilon)}\log g_{n_i}(y))\\
&\geq\sum\limits_{i=1}^{l}\exp(-n_i\lambda+\log g_{n_i}(z_i))\\
&\geq \sum\limits_{i=1}^{l}\exp(-n_i\lambda+n_i(\mathcal{G}_{*}(\mu)-\rho))\\
&=\sum\limits_{i=1}^{l}\exp(-n_i(h^u_{\mu}(f)-\rho))\\
&\geq \sum\limits_{i=1}^{l}\mu^{\eta}_{x}(B_{n_i}^u(z_i, \epsilon))\\
&>1-\gamma>\frac{1}{2}.
\end{aligned}
$$
Hence $$M^u(\mathcal{G}, \lambda, n, \epsilon, K_N\cap Z, \overline{\eta(x)})>\frac{1}{2}\; \text{for any}\; \epsilon,$$ Thus $$m^u(\mathcal{G}, \lambda, \epsilon, K_N\cap Z, \overline{\eta(x)})>\frac{1}{2},$$  $$P^u_B(f, \mathcal{G}, \epsilon, K_N\cap Z, \overline{\eta(x)})\geq \lambda,$$ and $$P^u_B(f, \mathcal{G}, K_N\cap Z, \overline{\eta(x)})\geq \lambda.$$
Then by the arbitrariness of $\rho$, one has
$$
\begin{aligned}
P^u_B(f, \mathcal{G}, Z, \overline{\eta(x)})&\geq P^u_B(f, \mathcal{G}, K_N(\rho, \epsilon)\cap Z, \overline{\eta(x)})\\
&\geq h^u_{\mu}(f)+\mathcal{G}_{*}(\mu).
\end{aligned}
$$
Therefore, $$P^u_{B, \mu}(f, \mathcal{G})\geq h^u_{\mu}(f)+\mathcal{G}_{*}(\mu).$$

\end{proof}

\textbf{Now we proceed to prove Theorem \ref{Main Result} :}

\begin{proof}
It follows from Theorem \ref{variational principle}, Lemma \ref{Capacity-Inequality}, \ref{Bowen-Inequality}, and Proposition \ref{basic property}.
\end{proof}

\begin{coro}

 Given any $\mu\in \mathcal{M}^e_{f}(M)$, and any sequence of sub-additive potentials $\mathcal{G}=\{\log g_n\}_{n=1}^{\infty}$ of $f$ on $M$.
Set $$\begin{aligned}
K=\{x\in M\mid &\lim\limits_{\epsilon\to 0}\lim\limits_{n\to \infty}\frac{-1}{n}\log \mu^{\eta}_{x}(B^u_n(x, \epsilon))=h^u_{\mu}(f)\\      &\text{and}\,\lim\limits_{n\to \infty}\frac{1}{n}\log g_n(x)=\mathcal{G}_{*}(\mu)\},
\end{aligned}$$ 
then $$P^u_{\mu}(f, \mathcal{G})=P^u_{B}(f, \mathcal{G}, K)=\underline{CP}^u(f, \mathcal{G}, K)=\overline{CP}^u(f, \mathcal{G}, K).$$

\end{coro}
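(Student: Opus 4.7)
The plan is to bracket $P^u_B(f, \mathcal{G}, K)$, $\underline{CP}^u(f, \mathcal{G}, K)$, and $\overline{CP}^u(f, \mathcal{G}, K)$ at the common value $h^u_\mu(f) + \mathcal{G}_{*}(\mu) = P^u_\mu(f, \mathcal{G})$, using Theorem \ref{variational principle} for the identification, Proposition \ref{basic property}(iii) for the inner chain $P^u_B \leq \underline{CP}^u \leq \overline{CP}^u$, and adaptations of Lemmas \ref{Bowen-Inequality} and \ref{Capacity-Inequality} for the two outer bounds. A preliminary step is to verify $\mu(K)=1$: this combines the ergodicity of $\mu$ with Lemma 3.2 and Theorem A of \cite{Hu1} (yielding the unstable SMB-type limit $\lim_{\epsilon\to 0}\lim_n -\tfrac{1}{n}\log\mu^{\eta}_x(B^u_n(x,\epsilon)) = h^u_\mu(f)$ $\mu$-a.e.) with the Sub-additive Ergodic Theorem (giving $\tfrac{1}{n}\log g_n(x) \to \mathcal{G}_{*}(\mu)$ $\mu$-a.e.). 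Hence $\mu^{\eta}_x(K)=1$ for $\mu$-a.e.\,$x$ by Rokhlin disintegration.

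For the lower bound $P^u_B(f, \mathcal{G}, K) \geq h^u_\mu(f) + \mathcal{G}_{*}(\mu)$, I would revisit the proof of Lemma \ref{Bowen-Inequality}: that argument in fact establishes $P^u_B(f, \mathcal{G}, Z, \overline{\eta(x)}) \geq h^u_\mu(f) + \mathcal{G}_{*}(\mu)$ for every $Z$ with $\mu^{\eta}_x(Z)=1$, so specializing to $Z=K$ produces the bound on $\overline{\eta(x)}$ for $\mu$-a.e.\,$x$. Given any $\delta>0$, choose $\eta\in\mathcal{P}^u$ with $d^u$-diameter smaller than $\delta$, so that $\overline{\eta(x)} \subseteq \overline{W^u(x,\delta)}$ for every $x$; since a cover of the larger set automatically covers the smaller, $P^u_B(f, \mathcal{G}, K, \overline{W^u(x,\delta)}) \geq P^u_B(f, \mathcal{G}, K, \overline{\eta(x)})$. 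Taking supremum over $x \in M$ and letting $\delta \to 0$ yields the lower bound.

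For the upper bound $\overline{CP}^u(f, \mathcal{G}, K) \leq h^u_\mu(f) + \mathcal{G}_{*}(\mu)$, I adapt the proof of Lemma \ref{Capacity-Inequality}: using the decomposition $K \subseteq \bigcup_N K_N(\rho,\epsilon)$ together with Proposition \ref{basic property}(ii), it suffices to bound $\overline{CP}^u(f, \mathcal{G}, K \cap K_N, \overline{W^u(x,\delta)})$ uniformly in $x \in M$. Take a maximal $(n,\epsilon)$ $u$-separated subset $E$ of $K \cap K_N \cap \overline{W^u(x,\delta)}$, partition $E$ across the (finitely many, uniformly in $n$) elements of $\eta$ that meet $\overline{W^u(x,\delta)}$, and on each piece apply the estimate $\mu^{\eta}_y(B^u_n(y,\epsilon/2)) \geq e^{-n(h^u_\mu(f|\eta)+\rho)}$ (which holds for every $y \in K_N$) together with the disjointness of the $\epsilon/2$-balls to bound its cardinality by $e^{n(h^u_\mu(f|\eta)+\rho)}$. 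Combining the resulting estimate $|E| \leq C(\delta,\eta)\,e^{n(h^u_\mu(f|\eta)+\rho)}$ with Lemma \ref{lemma 2.10} to handle the potential, then sending $k\to\infty$, $\rho\to 0$, taking sup over $x$, $\delta\to 0$, and invoking Theorem A of \cite{Hu1}, delivers the bound.

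The main obstacle is this last step: Lemma \ref{Capacity-Inequality} is intrinsically leaf-wise, relying on the conditional measure on a single element $\eta(x)$, whereas the topological capacity pressure $\overline{CP}^u(f, \mathcal{G}, K)$ requires a supremum over \emph{all} $x \in M$, and the local unstable disk $\overline{W^u(x,\delta)}$ may straddle several elements of $\eta$. The remedy sketched above—splitting the separated set across its intersections with distinct $\eta$-elements and summing the leaf-wise bounds—works because the number of $\eta$-elements meeting a given $\overline{W^u(x,\delta)}$ is bounded independently of $n$; making this uniform in $x$ requires some care but follows from compactness of $M$ and the fixed upper bound on the diameter of elements of $\mathcal{P}^u$. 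With both outer bounds in hand and Proposition \ref{basic property}(iii), all four quantities in the corollary collapse to $h^u_\mu(f)+\mathcal{G}_{*}(\mu) = P^u_\mu(f,\mathcal{G})$.
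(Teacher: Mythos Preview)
Your proposal is correct and follows essentially the same route as the paper: verify $\mu(K)=1$, stratify $K$ and adapt Lemma~\ref{Capacity-Inequality} for the upper bound on $\overline{CP}^u(f,\mathcal{G},K)$, adapt Lemma~\ref{Bowen-Inequality} for the lower bound on $P^u_B(f,\mathcal{G},K)$, and close with Proposition~\ref{basic property}(iii) and Theorem~\ref{variational principle}. You are simply more explicit than the paper about the passage from the leaf $\overline{\eta(x)}$ to the unstable disk $\overline{W^u(x,\delta)}$, which the paper absorbs into the phrase ``by a quite similar proof.''
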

\begin{proof} It is easy to see that $\mu(K)=1$.
For any positive integer $k$, any $\epsilon>0$, and any small number $\rho>0$, take $\eta\in \mathcal{P}^u$, by Lemma 3.2 in \cite{Hu1} and the Birkhoff's ergodic theorem, one has
$$\lim\limits_{n\to \infty}-\frac{1}{n}\log \mu^{\eta}_{y}(B_n^u(y, \epsilon/2))=h^u_{\mu}(f|\eta)$$ for $\mu$-a.e. $y$, and $$\lim\limits_{n\to \infty}\frac{1}{n}\sum\limits_{i=0}^{n-1}\frac{1}{k}\log g_k(f^iy)=\int\frac{1}{k}\log g_kd\mu$$ for $\mu$-a.e. $y$.

For any $y\in K$, there exists an $N(y)=N(y, \rho, \epsilon)>0$ such that if $n\geq N(y)$, then
$$\left|\frac{1}{n}\log \mu^{\eta}_{y}(B_{n_i}^u(y, \epsilon/2))+h^u_{\mu}(f|\eta)\right|\leq \rho,$$
 and
 $$\left|\frac{1}{n}\sum\limits_{i=0}^{n-1}\frac{1}{k}\log g_k(f^iy)-\int\frac{1}{k}\log g_kd\mu\right|\leq \rho.$$ Set $K_n=\{y\in K\mid N(y, \rho, \epsilon)\leq n\}$, then $K=\bigcup_{n\geq 1}K_n$.
By a quite similar proof of Lemma \ref{Capacity-Inequality} and let $n\to \infty$, we get
$$\overline{CP}^u(f, \mathcal{G}, K, \overline{W^u(x, \delta)})\leq h^u_{\mu}(f|\eta)+\int\frac{1}{k}\log g_kd\mu+3\rho.$$
Let $k\to \infty$, by the arbitrariness of $\rho$ , one gets
$$\overline{CP}^u(f, \mathcal{G},K)\leq h^u_{\mu}(f)+\mathcal{G}_{*}(\mu).$$ 
On the other hand, by  Proposition \ref{basic property}, one has $$P^u_B(f, \mathcal{G}, K)\leq \underline{CP}^u(f, \mathcal{G},K)\leq\overline{CP}^u(f, \mathcal{G},K),$$ and by a similar proof of Lemma \ref{Bowen-Inequality}, one gets $$P^u_B(f, \mathcal{G}, Z, \overline{W^u(x, \delta)})\geq h^u_{\mu}(f)+\mathcal{G}_{*}(\mu).$$ for any unstable neighborhood $\overline{W^u(x, \delta)}$, hence $$P^u_B(f, \mathcal{G}, K)\geq h^u_{\mu}(f)+\mathcal{G}_{*}(\mu),$$ then the conclusion follows based on Theorem \ref{variational principle}.
\end{proof}

\proof[Acknowledgements] The first author is supported by a NSFC (National Science Foundation of China) grant with grant No.\,11501066 and a grant from the Department of Education in Chongqing City with contract No.\,KJ1705122 in Chongqing Jiaotong University; she is also supported by the Program of Chongqing Innovation Team Project in University under Grant CXTDX201601022 in Chongqing Jiaotong University.

The second author is supported by the Fundamental Research Funds for the Central Universities with Project No.\,2018CDXYST0024 in Chongqing University.

The third author is supported by the National Science Foundation of China with grant No.\,11871120; he is also supported by the Foundation and Frontier Research Program of Chongqing (cstc2016jcyjA0312) and the Fundamental Research Funds for the Central Universities with Project No.\,2018CDQYST0023.

\end{document}